\date{\today}
\newtheorem{thm}{Theorem}
\newtheorem{lemma}[thm]{Lemma}
\newtheorem{prop}[thm]{Proposition}
\theoremstyle{definition}
\theoremstyle{remark}
\newtheorem{remark}[thm]{Remark}
\def\mathcs{C^{*}}
\newcommand{\cs}{\ensuremath{\mathcs}}
\DeclareMathSymbol{\rtimes}{\mathbin}{AMSb}{"6F}
\newcommand{\ib}{im\-prim\-i\-tiv\-ity bi\-mod\-u\-le}
\newcommand{\sme}{\,\mathord{\mathop{\text{--}}\nolimits_{\relax}}\,}
\def\ibind#1{\mathop{#1\mathord{\mathop{\text{--}}}}\!\Ind\nolimits}
\DeclareMathOperator{\Ind}{Ind}
\DeclareMathOperator{\Prim}{Prim}
\DeclareMathOperator{\id}{id}
\def\set#1{\{\,#1\,\}}
\let\tensor=\otimes
\def\restr#1{|_{{#1}}}
\newbox\hidebox
\def\spechide#1{\setbox\hidebox=\hbox{$#1$}
\hbox to\wd\hidebox{$\box\hidebox^\wedge$\hss}}
\def\labelenumi{\textnormal{(\@alph\c@enumi)}}
\def\theenumi{\@alph \c@enumi}
\def\labelenumii{\textnormal{(\@roman\c@enumii)}}
\def\theenumii{\@roman \c@enumii}
\def\alphapart#1{\charno=96
\advance\charno by#1\char\charno}
\def\<{\langle}
\def\>{\rangle}
\let\ipscriptstyle=\scriptscriptstyle
\def\lipsqueeze{{\mskip -3.0mu}}
\def\ripsqueeze{{\mskip -3.0mu}}
\def\ipcomma{\nobreak\mathrel{,}\nobreak}
\newbox\ipstrutbox
\def\ipstrut{\copy\ipstrutbox}
\def\lip#1<#2,#3>{\mathopen{\relax_{\ipstrut\ipscriptstyle{
#1}}\lipsqueeze
\langle} #2\ipcomma #3 \rangle}
\def\blip#1<#2,#3>{\mathopen{\relax_{\ipstrut
\ipscriptstyle{ #1}}\lipsqueeze\bigl\langle} #2\ipcomma #3 \bigr\rangle}
\def\rip#1<#2,#3>{\langle #2\ipcomma #3
\rangle_{\ripsqueeze\ipstrut\ipscriptstyle{#1}}}
\def\brip#1<#2,#3>{\bigl\langle #2\ipcomma #3
\bigr\rangle_{\ripsqueeze\ipstrut\ipscriptstyle{#1}}}
\def\angsqueeze{\mskip -6mu}
\def\smangsqueeze{\mskip -3.7mu}
\def\trip#1<#2,#3>{\langle\smangsqueeze\langle #2\ipcomma #3
\rangle\smangsqueeze\rangle_{\ripsqueeze\ipstrut\ipscriptstyle{#1}}}
\def\btrip#1<#2,#3>{\bigl\langle\angsqueeze\bigl\langle #2\ipcomma
#3
\bigr\rangle
\angsqueeze\bigr\rangle_{\ripsqueeze\ipstrut\ipscriptstyle{#1}}}
\def\tlip#1<#2,#3>{\mathopen{\relax_{\ipstrut\ipscriptstyle{
#1}}\lipsqueeze \langle\smangsqueeze\langle} #2\ipcomma #3
\rangle\smangsqueeze\rangle}
\def\btlip#1<#2,#3>{\mathopen{\relax_{\ipstrut\ipscriptstyle{
#1}}\lipsqueeze
\bigl\langle\angsqueeze\bigl\langle} #2\ipcomma #3
\bigr\rangle\angsqueeze\bigr\rangle}
\def\ip(#1|#2){(#1\mid #2)}
\def\bip(#1|#2){\bigl(#1 \mid #2\bigr)}
\def\Bip(#1|#2){\Bigl( #1 \bigm| #2 \Bigr)}
\let\mathscr\mathcal}
\newcommand{\bundlefont}[1]{\mathscr{#1}}
\newcommand{\A}{\bundlefont A}
\newcommand{\B}{\bundlefont B}
\newcommand\D{\bundlefont D}
\newcommand{\go}{G^{(0)}}
\def\sa_#1(#2;#3){\Gamma_{#1}(#2;#3)}
\newcommand\prima{\Prim A}
\newcommand\clsp{\overline{\operatorname{span}}}
\newcommand\tp{\overline{P}}
\newcommand\OO{\mathcal{O}}
\newcommand\usc{up\-per semi\-con\-tin\-uous}
\newcommand\BI{\B_{I}}
\newcommand\bi{B_{I}}
\newcommand\BqI{\B^{I}}
\newcommand\bqi{B^{I}}
\newcommand\I{\mathcal{I}}
\newcommand\gcgb{\sa_{c}(G;\B)}
\newcommand\gcgbi{\sa_{c}(G;\BI)}
\newcommand\gcgbqi{\sa_{c}(G;\BqI)}
\renewcommand\H{\mathcal{H}}
\newcommand\HH{\mathscr{H}}
\DeclareMathOperator{\Ex}{Ex}
\newcommand\tA{\tilde A}
\newcommand\ia{i_{A}}
\newcommand\oI{\mathscr{I}}
\let\phi\varphi
\begin{document}
\title{\boldmath Remarks on the Ideal Structure of Fell Bundle \cs-Algebras}

\author{Marius Ionescu}

\address{Department of Mathematics \\ University of Connecticut,
  Storrs, CT 06269-3009}

\email{ionescu@math.uconn.edu}

\author{Dana Williams}
\address{Department of Mathematics \\ Dartmouth College \\ Hanover, NH
03755-3551}

\email{dana.p.williams@Dartmouth.edu}

\thanks{This research was supported by the Edward Shapiro fund at
  Dartmouth College.}

\begin{abstract}
  We show that if $p:\B\to G$ is a Fell bundle over a locally compact
  groupoid $G$ and that $A=\sa_{0}(\go;\B)$ is the \cs-algebra sitting
  over $\go$, then there is a continuous $G$-action on $\prima$ that
  reduces to the usual action when $\B$ comes from a dynamical system.
  As an application, we show that if $I$ is a $G$-invariant ideal in
  $A$, then there is a short exact sequence of \cs-algebras
  \begin{equation*}
    \xymatrix{0\ar[r]&\cs(G,\BI)\ar[r]
    &\cs(G,\B)\ar[r]&\cs(G,\BqI)\ar[r]&0,} 
  \end{equation*}
where $\cs(G,\B)$ is the Fell bundle \cs-algebra and $\BI$ and $\BqI$
are naturally defined Fell bundles corresponding to $I$ and $A/I$,
respectively.  Of course this exact sequence reduces to the usual one
for \cs-dynamical systems. 
\end{abstract}

\maketitle
%%
%%\tableofcontents
%%

\section*{Introduction}
\label{sec:introduction}

An important source of examples of \cs-algebras are constructs
associated to dynamics of some sort.  Coming first on any such list
would be group \cs-algebras followed closely by crossed product
\cs-algebras coming from locally compact automorphism groups.  More
generally, we have the \cs-algebras associated to locally compact
groupoids and groupoid dynamical systems.  A very important extension
of group dynamical systems is given by Fell bundles over groups.  Fell
bundles over groups were originally called (saturated) \cs-algebraic
bundles by Fell, and are studied systematically in
\cite{fd:representations2}*{Chap.~VIII}.  The \cs-algebra of a Fell
bundle over a group $G$ should be considered as a very general type of
crossed product of the \cs-algebra sitting over the unit of $G$.
Following Yamagami \citelist{\cite{yam:pm90}\cite{yam:xx87}}, Kumjian
formalized the notion of a Fell bundle $p:\B\to G$ over a locally
compact groupoid in \cite{kum:pams98}.  In this case, the associated
\cs-algebra is meant to be a general type of crossed product of the
\cs-algebra $A=\sa_{0}(G;\B)$ of $\B$ sitting over the unit space
$\go$.  This is illustrated by the examples in
\cite{muhwil:dm08}*{\S2}.

The \cs-algebras of Fell bundles have been the object of considerable
study starting with the group context
\citelist{\cite{exe:ma02}\cite{beaexe:ms01}
  \cite{exe:pjm00}\cite{exe:jfram97}\cite{exemar:pams97}}, then over
\'etale groupoids \citelist{\cite{ful:hjm98}\cite{fulmuh:ijm98}
  \cite{dkr:ms08}} and eventually in the general setting
\citelist{\cite{muhwil:dm08}
  \cite{kmqw:xx09}\cite{busexe:xx09}\cite{bmz} \cite{bus:xx09}}.  This
note is meant as a first step in a systematic investigation of the
ideal structure of \cs-algebras associated to Fell bundles over
locally compact (Hausdorff) groupoids.  Our first result is to show
that if $p:\B\to G$ is a Fell bundle and $A=\sa_{0}(G;\B)$ is the
\cs-algebra of $\B$ over $\go$, then even though there is no explicit
action of $G$ on $A$, there is a natural $G$-action of $G$ on the
primitive ideal space $\prima$ of $A$ which generalizes the usual
notion when $\B$ is the Fell bundle associated to either a classical
\cs-dynamical system or a groupoid dynamical system.

Once we have a $G$-action on $\prima$, then it makes sense to speak of
an invariant ideal $I$ of $A$.  Our next contribution is that to each
$G$-invariant ideal $I$ in $A$, there are naturally associated Fell
bundles $p_{I}:\BI\to G$ and $p^{I}:\BqI\to G$ corresponding to $I$
and $A/I$, respectively, and a short exact sequence
\begin{equation*}
  \xymatrix{0\ar[r]&\cs(G,\BI)\ar[r]
    &\cs(G,\B)\ar[r]&\cs(G,\BqI)\ar[r]&0} 
\end{equation*}
of \cs-algebras.  This result generalizes the fundamental result in
\cs-dynamical systems that asserts that if $(A,G,\alpha)$ is a
\cs-dynamical system with $I$ an $\alpha$-invariant ideal in $A$, then
there is a short exact sequence
\begin{equation*}
  \xymatrix{0\ar[r]&I\rtimes_{\alpha}G
    \ar[r]&A\rtimes_{\alpha}G\ar[r]&(A/I)\rtimes_{\alpha^{I}}G\ar[r]&0.} 
\end{equation*}
(For example, see \cite{wil:crossed}*{Proposition~3.19}.)  However,
the proof is considerable more subtle in our setting and requires the
Disintegration Theorem for Fell bundles
\cite{wil:crossed}*{Theorem~4.13}.

In subsequent work, we intend to use these elementary results together
with \cite{ionwil:xx09b} to study the Mackey machine for Fell bundles
and the fine ideal structure of the corresponding \cs-algebras.

We adopt the usual conventions in the subject.  Representations of
\cs-algebras will be assumed to be nondegenerate, and homomorphisms
between \cs-algebras will always be $*$-preserving.  We will write
$M(A)$ for the multiplier algebra of a \cs-algebra $A$.  As in
\citelist{\cite{lan:hilbert}\cite{rw:morita}}, we view $M(A)$ as the
set of adjointable operators $\mathcal{L}(A)$ where $A$ is viewed as a
Hilbert module over itself.  We write $\tA$ for the \cs-subalgebra of
$M(A)$ generated by $A$ and $1_{A}$.  Thus $\tA$ is simply $A$ if $A$
has a unit and $A$ with a unit adjoined otherwise.

\section{Preliminaries}
\label{sec:preliminaries}

A Fell bundle $p:\B\to G$ over a locally compact Hausdorff groupoid
$G$ is an \usc\ Banach bundle equipped with a continuous, bilinear,
associative multiplication map $(a,b)\mapsto ab$ from $\B^{(2)}:=
\set{(a,b)\in \B\times\B:(p(a),p(b))\in G^{(2)}}$ to $\B$ and an
involution $b\mapsto b^{*}$ from $\B$ to $\B$ satisfying axioms
(a)--(e) of \cite{muhwil:dm08}*{Definition~1.1}.  We will adopt the
notations and conventions of \cite{muhwil:dm08} and refer to
\cite{muhwil:dm08}*{\S1} for the construction of the associated
\cs-algebra $\cs(G,\B)$ built from the $*$-algebra of continuous
compactly supported sections $\sa_{c}(G;\B)$ of $\B$.  In particular,
our Fell bundles are \emph{saturated} in that
$B(x)B(y):=\clsp\set{ab:\text{$a\in B(x)$ and $b\in B(y)$}}=B(xy)$.
Since we make considerable use of the Disintegration Theorem for Fell
bundles (see \cite{muhwil:dm08}*{Theorem~4.13}), we will need to
assume that all our Fell bundles are \emph{separable} in that $G$ is
second countable and that the Banach space $\sa_{0}(G;\B)$ is
separable.  It is important to keep in mind that the property that
each fibre $B(x)$ is an \ib\ has important consequences.  For example,
it follows that $b^{*}b\ge0$ in the \cs-algebra sitting over $s(b):=
s(p(b))$, and that $\|b^{*}b\|=\|b\|^{2}$ for all $b\in \B$.
Moreover, we have the following useful observation.

%%
% Pulled from Marius's notes on induced representations for Fell
% bundles.
%%

\begin{lemma}
  \label{lem-norm-ineq}
  If $p:\B\to G$ is a Fell bundle then $\Vert ab\Vert\le\Vert
  a\Vert\Vert b\Vert$ for all $(a,b)\in\B^{(2)}$.
\end{lemma}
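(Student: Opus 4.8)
The plan is to exploit the fact that each fibre $B(x)$ is an imprimitivity bimodule, so that for $(a,b)\in\B^{(2)}$ with $p(a)=x$ and $p(b)=y$, the element $ab$ lives in $B(xy)$, where the norm can be computed via one of the inner products coming from the bimodule structure. Concretely, $ab\in B(xy)$ and $B(xy)$ is a $C^{*}(r(xy))\smeover{}C^{*}(s(xy))$-imprimitivity bimodule, where I write $C^{*}(u)$ for the \calg\ sitting over a unit $u\in\go$; note $s(xy)=s(y)=s(b)$. Thus $\|ab\|^{2}=\|\rip{C^{*}(s(b))}<ab,ab>\|$, and the right-hand inner product can be expanded using the module axioms.

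The key computation is that the right inner product on $B(xy)$ is compatible with the multiplication: for $c,d\in B(xy)$ obtained as products, $\rip{}<ab,ab>$ should equal $(ab)^{*}(ab)=b^{*}(a^{*}a)b$ in $C^{*}(s(b))$, using associativity of the bundle multiplication and the fact that the involution reverses products. First I would record that $a^{*}a\in C^{*}(s(a))=C^{*}(r(y))$ and that $a^{*}a\ge 0$ with $\|a^{*}a\|=\|a\|^{2}$ (both facts are quoted in the paragraph preceding the lemma). Then $b^{*}(a^{*}a)b$ is of the form $b^{*}zb$ with $0\le z\le\|a\|^{2}1$ in the unitization $\widetilde{C^{*}(r(y))}$; since the left action of $C^{*}(r(y))$ on the imprimitivity bimodule $B(y)$ is by adjointable operators and is isometric (a *-homomorphism into the adjointable operators, injective because the bimodule is full), we get $0\le b^{*}zb\le\|a\|^{2}\,b^{*}b$ in $C^{*}(s(b))$, whence
\begin{equation*}
  \|ab\|^{2}=\|b^{*}(a^{*}a)b\|\le\|a\|^{2}\,\|b^{*}b\|=\|a\|^{2}\|b\|^{2},
\end{equation*}
which is the desired inequality.

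The main obstacle is the bookkeeping needed to justify the identity $\rip{C^{*}(s(b))}<ab,ab>=b^{*}(a^{*}a)b$ and the monotonicity step $b^{*}zb\le\|a\|^{2}b^{*}b$ cleanly within the imprimitivity-bimodule formalism: one must be careful that the left action of $C^{*}(r(y))$ on $B(y)$ extends to the multiplier/unitization level so that $b^{*}(\|a\|^{2}1-a^{*}a)b$ makes sense and is a positive element of $C^{*}(s(b))$, and that this positive element is exactly $\|a\|^{2}b^{*}b-b^{*}(a^{*}a)b$. This is where the Fell-bundle axioms (a)--(e) of \cite{muhwil:dm08}*{Definition~1.1}, together with the standard facts about $C^{*}$-modules that a positive multiplier $z$ satisfies $b^{*}zb\le\|z\|\,b^{*}b$, do the real work; everything else is a routine application of the $C^{*}$-identity $\|ab\|^{2}=\|(ab)^{*}(ab)\|$ and associativity.
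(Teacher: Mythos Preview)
Your proof is correct and rests on the same Hilbert-module inequality as the paper's, namely that for a positive adjointable operator $T$ on a right Hilbert $C$-module one has $\langle Ty,y\rangle_{C}\le\|T\|\,\langle y,y\rangle_{C}$ (applied with $T=a^{*}a$ acting on $B(y)$). The difference is organizational: the paper first invokes \cite{muhwil:dm08}*{Lemma~1.2} to identify $B(xy)$ with the balanced tensor product $B(x)\otimes_{A(s(x))}B(y)$ and then proves the general estimate $\|x\otimes y\|\le\|x\|\,\|y\|$ for internal tensor products of correspondences, whereas you work directly in $\B$ using associativity and the identity $(ab)^{*}(ab)=b^{*}(a^{*}a)b$. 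Your route is a bit more economical since it avoids the tensor-product detour; the paper's route has the virtue of isolating a reusable fact about $\|x\otimes y\|$ that holds for arbitrary \cs-correspondences. Either way the substantive step is identical.
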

\begin{proof}
  The lemma follows from \cite{muhwil:dm08}*{Lemma~1.2} once we prove
  the following observation (which is probably known to specialists
  but we have been unable to find a reference in the literature): If
  $A,B,C$ are $C^{*}$-algebras, and $_{A}X_{B}$ and $_{B}Y_{C}$ are
  \ib s,\footnote{All that is required here is for $X$ and $Y$ to be a
    right Hilbert modules with actions of $A$ and $B$, respectively,
    coming from homomorphisms into the adjointable operators.  Such
    objects are either called \emph{\cs-correspondences} or
    \emph{right Hilbert bimodules} in the literature.}  then\[ \Vert
  x\otimes y\Vert\le\Vert x\Vert\Vert y\Vert\] for all $x\otimes y\in
  X\otimes_{B}Y$. The proof of this observation follows from the
  computation:
  \begin{align*}
    \Vert x\otimes y\Vert^{2} & =\Vert\rip C <x\otimes y,x\otimes
    y>\Vert=\bigl\Vert\brip C<{\rip B <x,x>}\cdot y,y> \bigr\Vert\\
    & =\bigl\Vert \brip C<{\rip B< x,x>^{1/2}}\cdot y,{\rip B<
      x,x>^{1/2}}\cdot y > \bigr\Vert.
  \end{align*}
  If $T$ is an adjointable operator on $Y$ we know that $\rip C<
  Ty,Ty>\le\Vert T\Vert^{2}\Vert\rip C< y,y>\Vert$.  Since $B$ acts on
  the left on $Y$ via adjointable operators it follows that
  \begin{equation*}
    \Vert
    x\otimes y\Vert^{2}\le\bigl\Vert\Vert\rip B<
    x,x>^{1/2}\Vert^{2}\rip C<
    y,y>\bigr\Vert =\Vert x\Vert^{2}\Vert y\Vert^{2}.\qedhere
  \end{equation*}
\end{proof}

We will also require a number of basics concerning \usc\ Banach
bundles.  The definition of an \usc-Banach bundle, as well as a
collection of basic results and original sources, are given in
\cite{muhwil:dm08}*{Appendix~A}.  Actually, much of what is needed is
covered in detail in \cite{wil:crossed}*{Appendix~C.2}; unfortunately,
all the results in \cite{wil:crossed}*{Appendix~C.2} are stated in
terms of \usc\ \cs-bundles even though the additional \cs-structure is
not always necessary for the result.  For example, the proof of
\cite{wil:crossed}*{Proposition~C.20}, which characterizes convergence
in the total space, makes no use of the \cs-axioms and remains valid
for \usc-Banach bundles by simply replacing ``\usc\ \cs-bundle over
$X$'' by ``\usc-Banach bundle over $X$''.  Therefore we will
sheepishly, but firmly, cite results like
\cite{wil:crossed}*{Proposition~C.20} below for \usc-Banach bundles
with the implicit understanding appropriate Banach bundle result is
valid with essentially the same proof as given in
\cite{wil:crossed}*{Appendix~C.2}. For example, if $p:\B\to X$ is an
\usc-Banach bundle over $X$, then we note that $\sa_{0}(X;\B)$ is a
Banach space in the sup-norm by (the same proof as given in)
\cite{wil:crossed}*{Proposition~C.23}.

Although the total space of a Banach bundle is an important
theoretical tool in groupoid constructs, in the wild Banach bundles
are built by first specifying what the sections should be.  As far as
we know, the next result is originally due to Hofmann
\citelist{\cite{hof:74}\cite{hof:lnm77}}, although the only the
details for continuous bundles were ever published (see, for example,
\cite{fd:representations1}*{Theorem~II.13.18}).\footnote{Recently,
  Buss and Exel have published an version of this result which does
  not even require the base space $X$ to be Hausdorff:
  \cite{busexe:xx09}*{Proposition~2.4}.  In their result, they make
  the extra assumption that $\set{f(x):f\in \Gamma}$ is all of $B(x)$.
  This is required to ensure that the topology is unique when $X$
  fails to be Hausdorff.  In our applications, this extra assumption
  on the fibres is satisfied, so we could also appeal to
  \cite{busexe:xx09}*{Proposition~2.4}.} What is needed is the
following.

\begin{thm}[Hofmann-Fell]
  \label{thm-build-bundle}
  Let $X$ be a locally compact space and suppose that for each $x\in
  X$, we are given a Banach space $B(x)$.  Let $\B$ be the disjoint
  union $\coprod_{x\in X}B(x)$, and let $p:\B\to X$ be the obvious
  bundle.  Suppose that $\Gamma$ is a subspace of sections such that
  \begin{enumerate}
  \item for each $f\in \Gamma$, $x\mapsto \|f(x)\|$ is upper
    semicontinuous, and
  \item for each $x\in X$, $\set{f(x):f\in\Gamma}$ is dense in $B(x)$.
  \end{enumerate}
  Then there is a unique topology on $\B$ such that $p:\B\to X$ is an
  \usc-Banach bundle over $X$ with $\Gamma\subset \sa_{}(X;\B)$.
\end{thm}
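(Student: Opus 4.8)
The plan is to run the classical Hofmann construction: manufacture the topology from a basis of \emph{tubes} around the sections in $\Gamma$, verify the bundle axioms by hand, and then deduce uniqueness from the standard net-convergence criterion. For $f\in\Gamma$, an open set $U\subseteq X$, and $\varepsilon>0$, put
\[
  W(f,U,\varepsilon):=\set{b\in\B:p(b)\in U\text{ and }\|b-f(p(b))\|<\varepsilon}.
\]
First I would check that the family of all such $W(f,U,\varepsilon)$ is a basis for a topology $\tau$ on $\B$. Every point of $\B$ lies in one of them by hypothesis~\partref{2}: if $b\in B(x_{0})$, pick $g\in\Gamma$ with $\|g(x_{0})-b\|<1$, so $b\in W(g,X,1)$. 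The only point really needing an argument is that an intersection $W(f_{1},U_{1},\varepsilon_{1})\cap W(f_{2},U_{2},\varepsilon_{2})$ is again a union of basic sets: given $b$ in this intersection with $x_{0}=p(b)$, I would choose $\delta>0$ with $\|b-f_{i}(x_{0})\|+\delta<\varepsilon_{i}$ for $i=1,2$, use hypothesis~\partref{2} to pick $g\in\Gamma$ with $\|g(x_{0})-b\|<\delta/2$, note that $g-f_{i}\in\Gamma$ so that hypothesis~\partref{1} makes $x\mapsto\|g(x)-f_{i}(x)\|$ upper semicontinuous, and shrink $U_{1}\cap U_{2}$ to a neighborhood $V$ of $x_{0}$ on which both of these functions stay below $\varepsilon_{i}-\delta/2$; two uses of the triangle inequality then give $b\in W(g,V,\delta/2)\subseteq W(f_{1},U_{1},\varepsilon_{1})\cap W(f_{2},U_{2},\varepsilon_{2})$.

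Next I would verify that $p:(\B,\tau)\to X$ is an \usc-Banach bundle. Continuity of $p$ is immediate from $W(f,U,\varepsilon)\subseteq p^{-1}(U)$, and $p$ is open because $f(x)\in W(f,U,\varepsilon)$ for every $x\in U$, so $p(W(f,U,\varepsilon))=U$. The remaining axioms --- upper semicontinuity of $b\mapsto\|b\|$, continuity of fibrewise addition and of scalar multiplication, and the requirement that $\|b_{i}\|\to 0$ together with $p(b_{i})\to x$ forces $b_{i}\to 0_{x}$ --- all follow one template: approximate the fibre elements in question by values of sections in $\Gamma$, use that sums and scalar multiples of members of $\Gamma$ again lie in $\Gamma$, invoke hypothesis~\partref{1} to bound the norms of the relevant differences on a neighborhood of the base point, and read the conclusion off the triangle inequality applied to the defining tubes. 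That $\Gamma\subseteq\sa_{}(X;\B)$ then falls out: if $f\in\Gamma$ and $W(g,U,\varepsilon)$ is a basic neighborhood of $f(x_{0})$, then $f-g\in\Gamma$, so $x\mapsto\|f(x)-g(x)\|$ is upper semicontinuous and hence $<\varepsilon$ on a neighborhood $V\ni x_{0}$, whence $f$ carries $V\cap U$ into $W(g,U,\varepsilon)$ and $f$ is continuous at $x_{0}$.

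For uniqueness, suppose $\tau'$ is any topology making $p:\B\to X$ an \usc-Banach bundle with $\Gamma\subseteq\sa_{}(X;\B)$. Every tube is $\tau'$-open: since $p$ and $f$ are $\tau'$-continuous so is $b\mapsto f(p(b))$, subtraction is $\tau'$-continuous, and the norm is $\tau'$-upper semicontinuous, so $b\mapsto\|b-f(p(b))\|$ is $\tau'$-upper semicontinuous and $W(f,U,\varepsilon)=p^{-1}(U)\cap\set{b\in\B:\|b-f(p(b))\|<\varepsilon}$ is $\tau'$-open; hence $\tau\subseteq\tau'$. For the reverse inclusion I would appeal to \cite{wil:crossed}*{Proposition~C.20} (valid for \usc-Banach bundles, as noted above), which characterizes convergence of a net in such a bundle purely in terms of $X$, the fibre norms, and the family $\Gamma$ of continuous sections with fibrewise dense values; since $(\B,\tau)$ and $(\B,\tau')$ are both \usc-Banach bundles carrying $\Gamma$ as such a family, they have the same convergent nets, hence the same closed sets, so $\tau=\tau'$.

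I expect the main obstacle to be bookkeeping rather than ideas: keeping the $\varepsilon$--$\delta$ estimates consistent across the basis property and across the continuity of the algebraic operations, and --- a little more delicately --- setting up the convergence criterion so that it visibly depends only on $X$, the fibre norms, and $\Gamma$, since the norm being merely upper semicontinuous rather than continuous is exactly the place where one must take some care.
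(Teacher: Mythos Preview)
Your sketch is essentially correct and follows the classical Hofmann--Fell tube construction; the only minor quibble is that in the uniqueness paragraph you should be explicit that the criterion of \cite{wil:crossed}*{Proposition~C.20} applies because $\Gamma$ has fibrewise dense values, so the convergence characterization for $\tau'$ can indeed be expressed using $\Gamma$ alone.

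That said, note that the paper does not actually prove this result at all: its ``proof'' consists entirely of citations to \cite{dg:banach}*{Proposition~1.3}, \cite{hof:74}, and (with the proviso about replacing \cs-bundles by Banach bundles) \cite{wil:crossed}*{Theorem~C.25}. Your proposal is, in effect, a reconstruction of what one finds in those references, so while you have supplied substantially more detail than the paper, you are not taking a different route --- you are filling in precisely the argument the paper punts on.
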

\begin{proof}
  This result is stated in \cite{dg:banach}*{Proposition~1.3} with a
  reference to \cite{hof:74}.  With the proviso discussed above, it
  follows from \cite{wil:crossed}*{Theorem~C.25}.
\end{proof}

\begin{remark}
  \label{rem-cts-bdle}
  If $p:\B\to G$ is a Fell bundle over a \emph{group} $G$, then the
  underlying Banach bundle is necessarily continuous.  This was
  observed in \cite{bmz}*{Lemma~3.30} and is apparently due to Exel.
  The idea is that $a\mapsto (a^{*},a)\mapsto a^{*}a\mapsto
  \|a^{*}a\|^{\frac12}$ must be continuous as maps $\B\to \B\times\B
  \to A \to [0,\infty)$.  The same argument shows that if $p:\B\to G$
  is a Fell bundle over a locally compact Hausdorff groupoid $G$, then
  the underlying Banach bundle is continuous if and only if the
  associated \cs-algebra $A:=\sa_{0}(\go;\B)$ is a continuous field
  over $\go$.
\end{remark}

If $p:\B\to G$ is a Fell bundle over a locally compact Hausdorff
groupoid $G$, then
\begin{equation*}
  A:=\sa_{0}(\go;\B)
\end{equation*}
is a \cs-algebra which we call \emph{the \cs-algebra of $\B$ sitting
  over $\go$}.  Note that $A$ is a $C_{0}(\go)$-algebra, and let
$\sigma:\prima\to \go$ be the associated structure map.\footnote{For a
  summary of basic results and our notations for $C_{0}(X)$-algebras,
  please refer to \cite{wil:crossed}*{\S C.1}.}  If $u\in \go$, let
$q_{u}:A\to A(u)$ be the quotient map with kernel $I_{u}$.  Then
$\sigma(P)=u$ if and only if $I_{u}\subset P$.\footnote{For $u\in\go$,
  the fibres $A(u)$ and $B(u)$ are identical as sets.  If there is an
  excuse for using different letters, it is that $A(u)$ is meant to be
  thought of as a \cs-algebra and $B(u)$ is $A(u)$ viewed as a
  $A(u)\sme A(u)$-\ib.}
 
Furthermore, $\prima$ is naturally identified with the disjoint union
of the $\Prim A(u)$ \cite{wil:crossed}*{Proposition~C.5}.  Thus we
will write
\begin{equation*}
  \prima=\set{(u,P):\text{$u\in \go$ and $P\in \Prim A(u)$}}.
\end{equation*}
It will be important to keep in mind that $(u,P)=q_{u}^{-1}(P)$.

We will need the following technical lemma on $C_{0}(X)$-algebras in
the proof of Proposition~\ref{prop-bqi}.

\begin{lemma}
  \label{lem-cox}
  Suppose that $A$ is a $C_{0}(X)$-algebra with structure map
  $\sigma:\prima\to X$ (see \cite{wil:crossed}*{Proposition~C.5}), and
  let $A(x)=A/I_{x}$ be the fibre over $x$.  Let $K$ be an ideal in
  $A$, and let
  \begin{equation*}
    F=\set{P\in\prima:P\supset K}
  \end{equation*}
  be the closed subset of $\prima$ identified with $\Prim (A/K)$.
  Then $\sigma\restr F$ induces a $C_{0}(X)$-structure on $A/K$ and $
  (A/K)(x)= (A/K)/\oI_{x}$, where $\oI_{x}=(I_{x}+K)/K\cong
  I_{x}/(I_{x}\cap K)$.  In particular, $(A/K)(x)\cong A/(K+I_{x})$.
\end{lemma}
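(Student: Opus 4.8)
The plan is to equip $A/K$ with a $C_{0}(X)$-structure by pushing the one on $A$ through the quotient map $\pi\colon A\to A/K$, and then to read off its fibres and its structure map by direct computation. Write $\Phi\colon C_{0}(X)\to ZM(A)$ for the structure homomorphism of $A$. Since $\pi$ is surjective it extends in the usual way to a unital homomorphism $\bar\pi\colon M(A)\to M(A/K)$ determined by $\bar\pi(m)\pi(a)=\pi(ma)$; this makes sense because $mK\subseteq K$ and $Km\subseteq K$ for every $m\in M(A)$, and $\bar\pi$ carries $ZM(A)$ into $ZM(A/K)$ since $A/K$ is an essential ideal of $M(A/K)$ and, for central $m$, $\bar\pi(m)$ commutes with $\pi(A)$. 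The composite $\bar\pi\circ\Phi$ is still nondegenerate: apply $\pi$ to $\overline{\Phi(C_{0}(X))A}=A$ and use continuity of $\pi$ to get $\overline{\bar\pi(\Phi(C_{0}(X)))(A/K)}=A/K$. Thus $A/K$ is a $C_{0}(X)$-algebra.

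Next I would compute its fibre ideal $J_{x}$ over $x$, namely the closed span of the $\bar\pi(\Phi(\phi))(A/K)$ with $\phi(x)=0$. Because $\bar\pi(\Phi(\phi))\pi(a)=\pi(\Phi(\phi)a)$, this span equals $\overline{\pi(I_{x})}$, where $I_{x}=\clsp\{\Phi(\phi)a:\phi\in C_{0}(X),\ \phi(x)=0,\ a\in A\}$ is the fibre ideal of $A$. The one place that needs a moment's care is that $\pi(I_{x})$ is already closed: $I_{x}$ is a \cs-algebra and $\pi\restr{I_{x}}$ is a $*$-homomorphism, so its image is closed. Hence $J_{x}=\pi(I_{x})=(I_{x}+K)/K=\oI_{x}$, and the second isomorphism theorem identifies $\oI_{x}$ with $I_{x}/(I_{x}\cap K)$. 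Therefore $(A/K)(x)=(A/K)/\oI_{x}$, and the third isomorphism theorem (together with the fact that $I_{x}+K$ is closed, being a sum of two closed ideals) gives $(A/K)(x)\cong A/(K+I_{x})$.

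It remains to check that the structure map of the $C_{0}(X)$-algebra $A/K$ provided by \cite{wil:crossed}*{Proposition~C.5} is $\sigma\restr F$ under the canonical identification $\Prim(A/K)\cong F$. By that proposition $\Prim(A/K)=\coprod_{x}\Prim\bigl((A/K)(x)\bigr)$, and a primitive ideal $P/K$ of $A/K$, with $P\in F$, contains $\oI_{x}=(I_{x}+K)/K$ exactly when $P\supseteq I_{x}$, i.e.\ exactly when $\sigma(P)=x$. So $\Prim\bigl((A/K)(x)\bigr)$ corresponds to $F\cap\sigma^{-1}(x)$ and the structure map sends $P/K$ to $\sigma(P)$, as desired. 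The only genuine obstacle in all of this is the closedness of $\pi(I_{x})$ flagged in the previous paragraph; the rest is routine manipulation of multiplier algebras and of the isomorphism theorems.
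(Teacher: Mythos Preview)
Your argument is correct and follows essentially the same route as the paper: push the $C_{0}(X)$-action through the quotient map, identify the fibre ideal $\oI_{x}$ as $q(I_{x})=(I_{x}+K)/K$, and finish with the isomorphism theorems. The only real difference is cosmetic: the paper verifies the compatibility $\phi\cdot q(a)=q(\phi\cdot a)$ by evaluating at primitive ideals $P/K$ via the Dauns--Hofmann isomorphism, whereas you obtain it directly from the definition of the extension $\bar\pi\colon M(A)\to M(A/K)$. Your explicit remark that $\pi(I_{x})$ is closed (as the image of a $*$-homomorphism) is a point the paper uses tacitly when it writes $\clsp\{q(\phi\cdot a)\}=q(I_{x})$, so flagging it is a small improvement in clarity rather than a departure in method.
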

\begin{proof}
  Recall that the Dauns-Hofmann Theorem
  \cite{wil:crossed}*{Theorem~A.24} gives an isomorphism $\Phi_{A}$ of
  $C^{b}(\prima)$ onto the center $ZM(A)$ of the multiplier algebra
  which is characterized by
  \begin{equation*}
    (\Phi_{A}(f)a)(P) =f(P)a(P),
  \end{equation*}
  where $a(P)$ denotes the image of $a$ in $A/P$.  Then we view $A$ as
  a $C_{0}(X)$-module via
  \begin{equation*}
    \phi\cdot a=\Phi_{A}(\phi\circ \sigma)a.
  \end{equation*}

  Also recall that
  \begin{equation*}
    I_{x}=\clsp\set{\phi\cdot a:\text{$a\in A$ and $\phi\in J_{x}$}},
  \end{equation*}
  where $J_{x}=\set{\phi\in C_{0}(X):\phi(x)=0}$.  Similarly, if
  $q:A\to A/K$ is the quotient map, then
  \begin{equation*}
    \oI_{x}=\clsp\set{\phi\cdot q(a):\text{$a\in A$ and
        $\phi\in J_{x}$}}.
  \end{equation*}
  Notice that if $P\supset K$, then
  \begin{equation*}
    \phi\cdot q(a)(P/K)= \phi(\sigma(P))q(a)(P/K) = q(\phi(\sigma(P))a)(P/K).
  \end{equation*}
  On the other hand, if $P\supset K$, then the natural isomorphism of
  $(A/K)/(P/K)$ with $A/P$ carries $q(a)(P/K)$ to $a(P)$.  It follows
  that
  \begin{equation*}
    q(\phi(\sigma(P))a)(P/K)=q(\phi\cdot a)(P/K).
  \end{equation*}
  Thus,
  \begin{equation*}
    \oI_{x}=\clsp\set{q(\phi\cdot a):\text{$a\in A$ and
        $\phi\in J_{x}$}} = q(I_{x})=(I_{x}+K)/K.
  \end{equation*}

  For the final statement, notice that
  \begin{equation*}
    (A/K)(x) = (A/K)/\oI_{x}
    = (A/K)/((I_{x}+K)/K) \cong A/(I_{x}+K). \qedhere
  \end{equation*}
\end{proof}

We will make use of the following remark in
\S\ref{sec:invariant-ideals}.

\begin{remark}
  \label{rem-cohen}
  Note that if $X$ is a $A\sme B$-\ib\ and if $J$ is an ideal in $A$,
  then $Y:=\clsp\set{a\cdot x:\text{$a\in J$ and $x\in X$}}$ is a
  nondegenerate $J$-module.  Then, employing the Cohen Factorization
  Theorem (\cite{rw:morita}*{Proposition~2.33}),
  \begin{align*}
    Y&= \set{a\cdot y:\text{$a\in J$ and $y\in Y$}}
    \subset \set{a\cdot x:\text{$a\in J$ and $x\in X$}} \\
    &\subset \clsp\set{a\cdot x:\text{$a\in J$ and $x\in X$}} =Y.
  \end{align*}
  Consequently, $Y=\set{a\cdot x:\text{$a\in J$ and $x\in X$}}$, and
  will routinely write $J\cdot X$ in place of $Y$.  Similarly, we'll
  write $X\cdot I$ for the corresponding $A\sme B$-submodule when $I$
  is an ideal in $B$.
\end{remark}

\section{The $G$-action on $\prima$}
\label{sec:boldmath-g-action}

Now we want to see that $\prima$ admits a $G$-action.  The key is to
recall that for each $x\in G$, $B(x)$ is a $A\bigl(r(x)\bigr) \sme
A\bigl(s(x)\bigr)$-\ib.  Thus by \cite{rw:morita}*{Corollary~3.33},
the \emph{Rieffel correspondence} defines a homeomorphism
\begin{equation*}
  h_{x}:\Prim A\bigl(s(x)\bigr) \to \Prim A\bigl(r(x)\bigr)
\end{equation*}
where $h_{x}$ is the restriction of $\ibind{B(x)}$ to $\Prim
A\bigl(s(x)\bigr)$.\footnote{Recall that if $X$ is an $A\sme B$-\ib,
  then $\ibind{X}$ is a continuous map from $\mathcal{I}(B)$ to
  $\mathcal{I}(A)$ characterized by $\ibind{X}(\ker
  L)=\ker\bigl(\ibind{X}(L)\bigr)$ for representations $L$ of $B$ (see
  \cite{rw:morita}*{Proposition~3.24}).\label{fn-1}} We will use the
convenient facts that $h_{x^{-1}}$ is the inverse to $h_{x}$, and that
$h_{x}$ is containment preserving \cite{rw:morita}*{Corollary~3.31}.

Then we can define
\begin{equation}
  \label{eq:1}
  x\cdot \bigl(s(x),P\bigr) := \bigl(r(x),h_{x}(P)\bigr).
\end{equation}
Suppose that $(x,y)\in G^{(2)}$ and that $L$ is a representation of
$A\bigl(s(x)\bigr)$.  It is not hard to check that
\begin{equation*}
  \ibind{B(x)}\bigl(\ibind{B(y)}(L)\bigr)\cong
  \ibind{\bigl(B(x)\tensor _{A(s(x))}B(y)\bigr)}(L).
\end{equation*}
Since $B(x)\tensor_{A(s(x))}B(y)$ and $B(xy)$ are isomorphic as \ib s
by \cite{muhwil:dm08}*{Lemma~1.2}, it follows from
\cite{rw:morita}*{Proposition~3.24} that
\begin{equation*}
  h_{x}\circ h_{y}=h_{xy}.
\end{equation*}
Since $h_{u}=\id_{\Prim A(u)}$, it follows that \eqref{eq:1} defines
an action of $G$ on $\prima$.
\begin{lemma}
  \label{lem-char-hx}
  Suppose that $x\in G$ and that $P\in \Prim A\bigl(s(x)\bigr)$.  Then
  \begin{equation*}
    h_{x}(P)=\clsp\set{adb^{*}:\text{$a,b\in B(x)$ and $d\in P$}}.
  \end{equation*}
\end{lemma}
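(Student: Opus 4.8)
The plan is to identify $h_x(P)$ via the Rieffel correspondence. Recall that for an $A\sme B$-\ib\ $X$, the Rieffel correspondence sends an ideal $J$ of $B$ to the ideal of $A$ spanned by inner products $\lip A<x\cdot d, y>$ (equivalently $\lip A<x,y\cdot d>$), or dually sends $J$ to $X\cdot J\cdot X^{*}$ in the linking-algebra picture. For the fibre $B(x)$, which is an $A(r(x))\sme A(s(x))$-\ib, the left inner product of $a,b\in B(x)$ is realized concretely inside $\B$ by the Fell bundle multiplication as $\lip A<a,b> = ab^{*}\in B(r(x)) = A(r(x))$, since the two inner products on each fibre come from the bundle operations (this is part of what it means for $B(x)$ to be an \ib; see the discussion in Section~1 and \cite{muhwil:dm08}*{Definition~1.1}). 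So the candidate description $\clsp\set{adb^{*}:a,b\in B(x),\ d\in P}$ is exactly $\clsp\set{\lip A<a\cdot d,b>:a,b\in B(x),\ d\in P}$, which is the standard formula for the ideal of $A(r(x))$ corresponding to $P\subset A(s(x))$ under the Rieffel correspondence.

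Concretely, first I would recall from \cite{rw:morita}*{Theorem~3.22 and Corollary~3.33} that for an $A\sme B$-\ib\ $X$, the Rieffel correspondence $\ibind{X}$ restricted to primitive ideals agrees with the map induced on ideals by $J\mapsto X\text{-}\!\Ind(J)$, and that the ideal of $A$ paired with an ideal $J$ of $B$ is $\clsp\set{\lip A<x,y\cdot d>:x,y\in X,\ d\in J}$. Applying this with $X=B(x)$, $A$ replaced by $A(r(x))$, $B$ replaced by $A(s(x))$, and $J=P$, the right-hand side becomes $\clsp\set{\lip A<a,b\cdot d>:a,b\in B(x),\ d\in P}$. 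Second, I would invoke the identification of the $A(r(x))$-valued inner product on $B(x)$ with the bundle multiplication: $\lip A<a,c> = ac^{*}$ for $a,c\in B(x)$. Taking $c = b\cdot d = bd$ (the right $A(s(x))$-action on $B(x)$ is again bundle multiplication) gives $\lip A<a,bd> = a(bd)^{*} = ad^{*}b^{*}$. Since $P$ is a (self-adjoint) ideal of $A(s(x))$, $d$ ranges over $P$ iff $d^{*}$ does, so $\clsp\set{ad^{*}b^{*}} = \clsp\set{adb^{*}}$, which is the claimed formula.

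The only genuine content beyond bookkeeping is matching the abstract inner products on the \ib\ $B(x)$ with the concrete bundle operations, and confirming that $h_x$ really is the ideal map induced by $\ibind{B(x)}$ rather than some opposite convention; both are recorded in the excerpt (the definition of $h_x$ in Section~2 and footnote~\ref{fn-1}) and in \cite{rw:morita}, so I expect no obstacle there. The main point to be careful about is the conventions for which inner product appears and where the adjoint lands, i.e. whether one writes $adb^{*}$ or $a^{*}db$; tracking $s(x)$ versus $r(x)$ and the left/right module structures on $B(x)$ resolves this — the $A(r(x))$-valued inner product must be the one that is bilinear over the $A(r(x))$-action, forcing the form $ab^{*}$, and hence $adb^{*}$ after inserting $d\in P$.
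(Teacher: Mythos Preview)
Your proposal is correct and takes essentially the same approach as the paper: both identify $h_{x}(P)$ via the Rieffel correspondence formula from \cite{rw:morita} and then use the fact that the $A(r(x))$-valued inner product on $B(x)$ is given by Fell bundle multiplication, $\lip*<a,b>=ab^{*}$. The only cosmetic difference is that the paper writes the Rieffel image as $\clsp\set{\lip*<ad,b>}$ (citing \cite{rw:morita}*{Proposition~3.24}), which gives $adb^{*}$ directly, whereas you use the equivalent form $\clsp\set{\lip A<a,bd>}$ and then invoke self-adjointness of $P$ to pass from $ad^{*}b^{*}$ to $adb^{*}$.
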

\begin{proof}
  By the axioms for Fell bundles, the $A\bigl(r(x)\bigr)$-valued inner
  product on the \ib{} $B(x)$ is given by $\lip *<a,b>=ab^{*}$.  Then
  we can apply \cite{rw:morita}*{Proposition~3.24} to check that
  \begin{align*}
    h_{x}(P)&:= \ibind{B(x)}(P) \\
    &= \clsp\set{\lip *<ad,b>:\text{$a,b\in B(x)$ and $d\in P$}} \\
    &= \clsp\set{adb^{*}:\text{$a,b\in B(x)$ and $d\in P$} }\qedhere
  \end{align*}
\end{proof}

\begin{remark}
  \label{rem-same-def}
  We want to see that the $G$-action given by \eqref{eq:1} is the same
  as the usual one when $p:\B\to G$ is the Fell bundle associated to a
  dynamical system $(D,G,\alpha)$ (as in
  \cite{muhwil:dm08}*{Example~2.1}).  To start with, assume that $G$
  is a group.  In this case $\B$ is the trivial bundle $D\times G$ and
  the multiplication in $\B$ is given by
  $(a,s)(b,t)=(a\alpha_{s}(g),st)$.  Although it is tempting to simply
  identify $D$ with $A(e)$, it is useful to distinguish the two.  In
  particular, if $P\in\Prim A(e)$, then there is an ideal $\tp\in\Prim
  D$ such that $P=\set{(d,e):d\in \tp}$.  Then Lemma~\ref{lem-char-hx}
  implies that
  \begin{align*}
    h_{s}(P)&=\clsp\set{(a,s)(d,e)(\alpha_{s^{-1}}(b^{*})
      ,s^{-1}):\text{$a,b\in D$ and
        $d\in \tp$}} \\
    &= \clsp\set{(a\alpha_{s}(d)b^{*},e): \text{$a,b\in D$ and
        $d\in \tp$}} \\
    &= (\alpha_{s}(\tp),e),
  \end{align*}
  and we recover the usual thing.

  If $G$ is a groupoid and $D=\sa_{0}(\go;\D)$, then $\B$ is the
  pull-back $r^{*}\D$, and the multiplication in $\B$ is given by
  \begin{equation*}
    (a,x)(b,y)=\bigl(a\alpha_{x}(b),xy\bigr).
  \end{equation*}
  Again, it is useful to distinguish the fibre $D(u)$ and its image
  $A(u)=\set{(d,u):d\in D(u)}$ in $\B$.  Now, invoking
  Lemma~\ref{lem-char-hx}, if $P\in \Prim A\bigl(s(x)\bigr)$ and
  $\overline P$ is the ideal in $D\bigl(s(x)\bigr)$ such that
  $P=\set{(d,s(x)):d\in \overline P}$, then
  \begin{align*}
    h_{x}(P)&= \clsp\set{(a,x)(d,s(x))(\alpha_{x^{-1}}(b^{*}),x^{-1}):
      \text{$a,b\in A\bigl(r(x)\bigr)$
        and $d\in\overline P$}} \\
    &= \clsp\set{\bigl(a\alpha_{x}(d)b^{*},r(x)\bigr):\text{$a,b\in
        A\bigl(r(x)\bigr)$
        and $d\in\overline P$} }\\
    &= \bigl(\alpha_{x}(\overline P),r(x)\bigr) \\
    &= \alpha_{x}(P).
  \end{align*}
  Therefore we recover the usual $G$-action on $\prima$ in this case
  as well.
\end{remark}

\begin{remark}[Viewing $h_{x}$ as a map on ideals]
  \label{rem-notation}
  To ease the notational burden, we will also write $h_{x}$ for the
  map $\ibind{B(x)}:\mathcal{I}\bigl(A(s(x)\bigr)\to
  \mathcal{I}\bigl(A(r(x)\bigr)$.  In view of
  \cite{rw:morita}*{Theorem~3.29}, it is still the case that
  $h_{x^{-1}}$ is the inverse to $h_{x}$, and of course, $h_{x}$ is
  still containment preserving.  Note that if $J$ is an ideal in $A$,
  then its image $q_{u}(J)$ in $A(u)$ is $\set{c(u):c\in J}$.  In
  particular, repeating the proof of Lemma~\ref{lem-char-hx}, we see
  that
  \begin{equation}
    \label{eq:8}
    h_{x}\bigl(q_{s(x)}(J)\bigr) = \clsp \set{ac\bigl(s(x)\bigr)b^{*}:
      \text{$a,b\in B(x)$ and $c\in J$}}.
  \end{equation}
\end{remark}

\begin{prop}
  \label{prop-continuous}
  If $p:\B\to G$ is a Fell bundle and $A$ is the associated
  \cs-algebra over $\go$, then the $G$-action on $\prima$ defined by
  \eqref{eq:1} is continuous and $\prima$ is a $G$-space.
\end{prop}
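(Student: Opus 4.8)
The plan is to establish continuity of the action directly, using the fact that the closed subsets of $\prima=\Prim A$ are exactly the hulls $\operatorname{hull}(I)=\set{Q\in\prima:I\subset Q}$ of ideals $I$ of $A$. Write $G\ast\prima=\set{(x,\xi)\in G\times\prima:s(x)=\sigma(\xi)}$ for the domain of the action. Since \eqref{eq:1} has already been shown to define an action and $\sigma$ is continuous, the content of the proposition is the continuity of the map $(x,(s(x),P))\mapsto(r(x),h_{x}(P))$ from $G\ast\prima$ to $\prima$, and for this it suffices to check that $\set{(x,(s(x),P)):I\subset(r(x),h_{x}(P))}$ is closed for every ideal $I$ of $A$. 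Using that $(v,Q)=q_{v}^{-1}(Q)$, one has $I\subset(r(x),h_{x}(P))$ iff $q_{r(x)}(I)\subset h_{x}(P)$ iff, after applying the order-preserving bijection $h_{x^{-1}}$ of Remark~\ref{rem-notation}, $K_{x}\subset P$, where by \eqref{eq:8}
\[
K_{x}:=h_{x^{-1}}\bigl(q_{r(x)}(I)\bigr)=\clsp\set{a\,c(r(x))\,b^{*}:\text{$a,b\in B(x^{-1})$ and $c\in I$}}
\]
is an ideal of $A(s(x))$. Thus the whole proposition reduces to showing that $\set{(x,(s(x),P)):K_{x}\subset P}$ is closed in $G\ast\prima$.

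To prove this I would take a net $(x_{i},(s(x_{i}),P_{i}))\to(x,(s(x),P))$ in $G\ast\prima$ with $K_{x_{i}}\subset P_{i}$ and show $K_{x}\subset P$. Since $K_{x}$ is the closed span of the elements $d=a\,c(r(x))\,b^{*}$ with $a,b\in B(x^{-1})$ and $c\in I$, and $P$ is closed, it is enough to put each such $d$ into $P$. Choose $\tilde a,\tilde b\in\gcgb$ with $\tilde a(x^{-1})=a$ and $\tilde b(x^{-1})=b$ (evaluation of continuous sections being surjective onto each fibre), and set $d_{i}:=\tilde a(x_{i}^{-1})\,c(r(x_{i}))\,\tilde b(x_{i}^{-1})^{*}\in A(s(x_{i}))$. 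Continuity of inversion, source and range on $G$ together with continuity of the multiplication and involution on $\B$ gives $d_{i}\to d$ in $\B$, while \eqref{eq:8} shows $d_{i}\in K_{x_{i}}\subset P_{i}$, so $\|d_{i}+P_{i}\|=0$. Now choose $\tilde d\in A$ with $\tilde d(s(x))=d$ (possible since $q_{s(x)}$ is surjective); then $\tilde d(s(x_{i}))\to d$ in $\B$, and, because $Q\mapsto\|\tilde d+Q\|$ is lower semicontinuous on $\Prim A=\prima$ and takes the value $\|\tilde d(v)+Q'\|$ at $(v,Q')$, while $\|d_{i}+P_{i}\|=0$,
\[
\|d+P\|=\|\tilde d(s(x))+P\|\le\liminf_{i}\|\tilde d(s(x_{i}))+P_{i}\|\le\liminf_{i}\|\tilde d(s(x_{i}))-d_{i}\|.
\]
Finally $\tilde d(s(x_{i}))-d_{i}$ lies in $B(s(x_{i}))$ and tends to the zero of $B(s(x))$ since both $\tilde d(s(x_{i}))$ and $d_{i}$ tend to $d$, so upper semicontinuity of the norm on $\B$ forces the last $\liminf$ to be $0$. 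Hence $d\in P$, so $K_{x}\subset P$ and the set is closed. This gives continuity of the action; since moreover $\sigma$ is continuous and \eqref{eq:1} is an action, $\prima$ is a $G$-space.

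The main obstacle---and the reason for the detour through hulls of ideals---is that the $P_{i}$ live in the varying fibres $A(s(x_{i}))$ and are a priori unrelated to $P$, so there is no direct way to move generators of $h_{x}(P)$ into the $h_{x_{i}}(P_{i})$ and prove $h_{x_{i}}(P_{i})\to h_{x}(P)$ head-on. Dualizing to the hull of $I$ and to $K_{x}$ turns the problem into the single membership statement $d\in P$ for a fixed $d$, which is precisely what the lower semicontinuity of the norm functions on $\Prim A$, the upper semicontinuity of the bundle norm, and the continuity of the bundle operations can handle. The remaining ingredients---surjectivity of the evaluations $\gcgb\to B(y)$ and $A\to A(u)$, the formula \eqref{eq:8}, and lower semicontinuity of $Q\mapsto\|c+Q\|$ on $\Prim A$---are standard and already available, so I expect no further difficulty.
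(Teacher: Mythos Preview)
Your proof is correct and follows the same overall strategy as the paper: both reduce continuity to showing that the set $\set{(x,(s(x),P)):K_{x}\subset P}$ is closed, where $K_{x}=h_{x^{-1}}\bigl(q_{r(x)}(J)\bigr)$, and both then take a convergent net with $K_{x_{i}}\subset P_{i}$ and verify that each spanning element $d=a\,c(r(x))\,b^{*}$ of $K_{x}$ lands in $P$ by approximating it with $d_{i}\in K_{x_{i}}\subset P_{i}$ built from continuous sections.

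The execution of the final step, however, is genuinely different. The paper packages the elements $d_{i}$ as the values $\xi(x_{i})$ of a single section $\xi$ of the pull-back \calg\ $C=\sa_{0}(G;r^{*}\A)$, invokes the description of $\specnp{C}$ from \cite{raewil:tams85}*{Proposition~1.3} to obtain $[x_{i}^{-1},\pi_{i}]\to[x_{0}^{-1},\pi_{0}]$, and concludes $[x_{0}^{-1},\pi_{0}](\xi)=0$. You instead lift $d$ to $\tilde d\in A$, use lower semicontinuity of $Q\mapsto\|\tilde d+Q\|$ on $\prima$, and kill the remainder $\|\tilde d(s(x_{i}))-d_{i}\|$ via upper semicontinuity of the bundle norm. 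Your route is more elementary in that it avoids the auxiliary algebra $C$ and the external citation, relying only on standard facts about $\prima$ and the \usc-bundle axioms; the paper's route, on the other hand, has the conceptual advantage of recasting the problem as convergence in a single primitive-ideal space, which makes the ``$d_{i}\in P_{i}\Rightarrow d\in P$'' step a one-liner once $\specnp{C}$ is identified.
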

\begin{proof}
  At this point, we just need to show that if $(x_{i},\tp_{i})\to
  (x_{0},\tp_{0})$ in $G*\prima$, then $x_{i}\cdot \tp_{i}\to
  x_{0}\cdot \tp_{0}$ in $\prima$. For convenience, let
  $\tp_{i}=\bigl(s(x_{i}),P_{i}\bigr)$.

  Suppose that $x_{0}\cdot
  \tp_{0}\in\OO_{J}=\set{K\in\prima:K\not\supset J}$. Then it will
  suffice to see that $x_{i}\cdot\tp_{i}$ is eventually in $\OO_{J}$.
  Suppose not.  Then we can pass to a subnet, relabel, as assume that
  for all $i\not=0$, we have
  \begin{equation}
    \label{eq:3}
    x_{i}\cdot \bigl(s(x_{i}),P_{i}\bigr)\supset J.
  \end{equation}
  Since $x_{i}\cdot \bigl(s(x_{i}),P_{i}\bigr)
  =\bigl(r(x_{i}),h_{x_{i}}(P_{i})\bigr)$, \eqref{eq:3} implies that
  $h_{x_{i}}(P_{i})\supset q_{r(x_{i})}(J)$.  Since the inverse of
  $h_{x_{i}}$ is $h_{x_{i}^{-1}}$, we have
  \begin{equation}
    \label{eq:4}
    P_{i }\supset
    h_{x_{i}^{-1}}\bigl(q_{r(x_{i})}(J)\bigr)\quad\text{for all
      $i\not=0$.} 
  \end{equation}
  I claim it will suffice to see that \eqref{eq:4} holds for $i=0$.
  To see this, notice that if \eqref{eq:4} holds for $i=0$, then
  \begin{equation*}
    h_{x_{0}}(P_{0})\supset q_{r(x_{0})}(J),
  \end{equation*}
  and since
  $\bigl(r(x),h_{x}(P)\bigr)=q_{r(x)}^{-1}\bigl(h_{x}(P)\bigr)$, this
  implies
  \begin{equation*}
    x_{0}\cdot
    \bigl(s(x_{0}),P_{0}\bigr)=\bigl(r(x_{0}),h_{x_{0}}(P_{0})\bigr) \supset
    q_{r(x_{0})}^{-1}\bigl( q_{r(x_{0})}(J)\bigr) \supset J.
  \end{equation*}
  But this contradicts the assumption that $x_{0}\cdot \tp_{0}\in
  \OO_{J}$ and will complete the proof.

  To establish the claim, we notice that when $i=0$ the right-hand
  side of \eqref{eq:4} is given by
  \begin{equation*}
    \clsp\set{a^{*}c\bigl(r(x_{0})\bigr)b:\text{$a,b\in B(x)$ and $c\in
        J$}}, 
  \end{equation*}
  where we have invoked the fact that $B(x^{-1})=B(x)^{*}$ and used
  \eqref{eq:8} from Remark~\ref{rem-notation}.  Thus it will suffice
  to show that for any $c\in J$ and $a,b\in B(x_{0})$, we have
  $a^{*}c\bigl(r(x_{0})\bigr)b\in P_{0}$.

  Since we always assume that Fell bundles have enough
  sections,\footnote{This is actually automatic.  See the comments on
    page~51 of \cite{muhwil:dm08}*{Appendix~A}.} there are $f,g\in
  \sa_{c}(G;\B)$ such that $f(x_{0})=a$ and $g(x_{0})=b$. Then we can
  form a section $\xi\in \sa_{c}(G;r^{*}\A)$ in the \cs-algebra
  $C:=\sa_{0}(G;r^{*}\A)$ given by
  \begin{equation*}
    \xi(x):= f(x)^{*}c\bigl(r(x)\bigr)g(x).
  \end{equation*}

  Notice that if $\pi$ is an irreducible representation of $A$ with
  $\sigma(\ker \pi)=u$, then there is an associated irreducible
  representation $\bar\pi$ of $A(u)$ such that $\pi=\bar\pi\circ
  q_{u}$.  If $x\in G$ and $r(x)=u$, then we get an irreducible
  representation $[x,\pi]$ of $C$ by
  $[x,\pi](f)=\bar\pi\bigl(f(x)\bigr)$.  Furthermore, by
  \cite{raewil:tams85}*{Proposition~1.3~and Lemma~1.2}, the spectrum
  of $C$ is homeomorphic to
  \begin{equation*}
    \set{[x,\pi]\in G\times\hat A:r(x)=\sigma(\ker\pi)}.
  \end{equation*}

  Now let $\pi_{i}$ be an irreducible representation of $A$ with
  kernel $\tp_{i}$.  Then, since the topology on $\hat A$ is pulled
  back from the topology on $\Prim A$, $\pi_{i}\to\pi_{0}$ in $\hat
  A$.  Consequently,
  \begin{equation}
    \label{eq:7}
    [x_{i}^{-1},\pi_{i}]\to[x_{0}^{-1},\pi_{0}]\quad\text{in $\hat C$}.
  \end{equation}
  By construction,
  \begin{equation*}
    \xi(x_{i})\in\clsp\set{a^{*}db:\text{$a,b\in B(x_{i})$ and $d\in
        q_{r(x_{i})}(J)$}} =h_{x_{i}^{-1}}\bigl(q_{r(x_{i})}(J)\bigr).
  \end{equation*}
  Hence $\xi(x_{i})\in P_{i}=\ker\bar\pi_{i}$ by \eqref{eq:4}.
  Therefore
  \begin{equation*}
    [x_{i}^{-1},\pi_{i}](\xi)=0\quad\text{for all $i\not=0$.}
  \end{equation*}
  Therefore $[x_{0}^{-1},\pi_{0}](\xi)=0$ by \eqref{eq:7}.  Since $c$
  was an arbitrary element of $J$, this proves that \eqref{eq:4} holds
  for $i=0$ and completes the proof.
\end{proof}

\section{Invariant Ideals}
\label{sec:invariant-ideals}

It is a classic result in crossed products that if $(A,G,\alpha)$ is a
dynamical system and if $I$ is an
$\alpha$-invariant ideal, then there is a short exact sequence
\begin{equation*}
  \xymatrix{0\ar[r]&I\rtimes_{\alpha}G\ar[r]^{\iota\rtimes\id}&A\rtimes_{\alpha}G
  \ar[r]^-{q\rtimes\id}&(A/I)\rtimes_{\alpha^{I}}G\ar[r]&0}
\end{equation*}
(see \cite{wil:crossed}*{Proposition~3.19}).  In this section, we want
to prove a similar result for Fell bundles.  This entails some
nontrivial work.  Even to start, we need to determine what an
invariant ideal is, and which \cs-algebras correspond to $I$ and the
quotient $A/I$.

\subsection{Preliminaries}
\label{sec:pr2}

We assume that $p:\B\to G$ is a separable Fell bundle over a locally
compact Hausdorff groupoid $G$.  Let $A=\sa_{0}(\go;\B)$ be the
\cs-algebra over $\go$.  We say that an ideal $I$ in $A$ is
\emph{$G$-invariant} if the closed set
\begin{equation*}
  \operatorname{hull}(I)=\set{\tp\in\prima:\tp\supset I}
\end{equation*}
is a $G$-invariant subset of $\prima$ with respect to the $G$-action
introduced in Proposition~\ref{prop-continuous}.

Now fix an ideal $I\in\mathcal{I}(A)$.  If $q_{u}:A\to A(u)=A/J_{u}$
is the quotient map, then we let $I(u):=q_{u}(I)=(I+J_{u})/J_{u}$.
\begin{lemma}
  \label{lem-key-ix}
  Let $h_{x}:\I\bigl(A(s(x))\bigr)\to \I\bigl(A(r(x))\bigr)$ be the
  Rieffel correspondence.  If $I$ is a $G$-invariant ideal, then
  \begin{equation*}
    h_{x}\bigl(I(s(x))\bigr)=I(r(x)).
  \end{equation*}
  In particular, $B(x)\cdot I(s(x))=I(r(x))\cdot B(x)$.
\end{lemma}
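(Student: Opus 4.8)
The plan is to prove the two assertions in order, using the characterization of $h_x$ from Lemma~\ref{lem-char-hx} (in the ideal form recorded in Remark~\ref{rem-notation}) together with the Cohen factorization observation from Remark~\ref{rem-cohen}. For the first statement, $h_x(I(s(x))) = I(r(x))$, the inclusion ``$\subseteq$'' is purely formal: since $I$ is $G$-invariant, every $\tp \in \operatorname{hull}(I)$ over $s(x)$ is carried by the action to a point over $r(x)$ that still contains $I$, and since $h_x$ on ideals is containment preserving with inverse $h_{x^{-1}}$, the Rieffel correspondence carries $\{Q \in \I(A(s(x))) : Q \supseteq I(s(x))\}$ into $\{Q' \in \I(A(r(x))) : Q' \supseteq I(r(x))\}$; intersecting and using that the Rieffel correspondence commutes with arbitrary intersections (it is a lattice isomorphism, \cite{rw:morita}*{Theorem~3.22 and~3.29}) gives $h_x(I(s(x))) \supseteq I(r(x))$. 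Wait --- one must be careful about direction here, so **the cleaner route** is: $I(s(x)) = \bigcap\{P \in \Prim A(s(x)) : P \supseteq I(s(x))\}$, each such $P$ has the form $q_{s(x)}(\tp)$ for $\tp \in \operatorname{hull}(I)$ lying over $s(x)$, and $h_x$ (as a homeomorphism of primitive ideal spaces) sends it to $h_x(P)$, which by $G$-invariance of $\operatorname{hull}(I)$ again lies in $\operatorname{hull}(I)$; so $h_x(I(s(x))) = h_x\bigl(\bigcap P\bigr) = \bigcap h_x(P) \supseteq I(r(x))$. Applying the same reasoning to $x^{-1}$ and using $h_{x^{-1}} = h_x^{-1}$ gives the reverse inclusion, hence equality.

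**Next**, the ``in particular'' claim. Here I use Equation~\eqref{eq:8}: with $J = I$ we have
\begin{equation*}
  h_x\bigl(I(s(x))\bigr) = \clsp\set{a\,c\bigl(s(x)\bigr)\,b^{*} : \text{$a,b \in B(x)$ and $c \in I$}}.
\end{equation*}
The right-hand side is visibly contained in $\clsp\set{a\,d : \text{$a \in B(x)$ and $d \in I(s(x))$}}$, which by Remark~\ref{rem-cohen} (applied to the $A(r(x))\sme A(s(x))$-imprimitivity bimodule $B(x)$ and the ideal $I(s(x))$ of $A(s(x))$) equals the closed submodule $B(x) \cdot I(s(x))$; conversely, since $I(s(x))$ is spanned by elements of the form $c(s(x))$ with $c \in I$ and since $d = d' d''^{*} d'''$-type factorizations are available inside the ideal $I(s(x))$ of the $\cs$-algebra $A(s(x))$, one gets the reverse containment, so in fact $h_x(I(s(x))) = B(x)\cdot I(s(x))$. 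By the symmetric computation --- using that the $A(s(x))$-valued inner product on $B(x)$ is $\rip *<a,b> = a^{*}b$, so that $h_x\bigl(I(s(x))\bigr)$ when written from the other side is $\clsp\set{a\,c(r(x))\,b^{*}}$ with the roles reversed --- one identifies $I(r(x))$ with $I(r(x))\cdot B(x)$ in the analogous way. Combining these with the equality $h_x(I(s(x))) = I(r(x))$ from the first part yields $B(x)\cdot I(s(x)) = I(r(x))\cdot B(x)$.

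**The main obstacle** I anticipate is bookkeeping rather than conceptual: keeping straight which fibre an ideal lives over, which side the module action is on, and that the left/right inner products on the imprimitivity bimodule $B(x)$ are $\lip *<a,b> = ab^{*}$ and $\rip *<a,b> = a^{*}b$ respectively, so that the two descriptions of $h_x(I(s(x)))$ --- as a subset of $A(r(x))$ expressed via $ab^{*}$, and as the module $B(x)\cdot I(s(x))$ --- genuinely coincide; the Rieffel correspondence identity \cite{rw:morita}*{Proposition~3.24} does exactly this, and Remark~\ref{rem-cohen} removes the closure so that the module expressions are literal equalities of sets, which is what makes the final identity $B(x)\cdot I(s(x)) = I(r(x))\cdot B(x)$ an honest equality and not merely an equality of closures. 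No separability or disintegration is needed for this lemma; it is entirely a consequence of Rieffel correspondence bookkeeping plus the definition of $G$-invariance.
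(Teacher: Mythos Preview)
Your argument for the first assertion is essentially the paper's: both use that $G$-invariance of $\operatorname{hull}(I)$ means $P\supset I(s(x))\iff h_{x}(P)\supset I(r(x))$ for $P\in\Prim A(s(x))$, and then invoke that the Rieffel correspondence is a lattice isomorphism so that ideals are recovered from the primitive ideals above them. The paper compresses this into a single chain of equivalences; your two-inclusion version via $x$ and $x^{-1}$ is the same idea.

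For the second assertion, however, there is a genuine type error. You write that
\[
h_{x}\bigl(I(s(x))\bigr)=\clsp\set{a\,c(s(x))\,b^{*}:a,b\in B(x),\ c\in I}
\]
is ``visibly contained in'' $\clsp\set{a\,d:a\in B(x),\ d\in I(s(x))}=B(x)\cdot I(s(x))$, and conclude $h_{x}(I(s(x)))=B(x)\cdot I(s(x))$. But these objects live in different spaces: $a\,c(s(x))\,b^{*}\in B(x)\,A(s(x))\,B(x^{-1})\subset A(r(x))$, whereas $a\,d\in B(x)\,A(s(x))\subset B(x)$. So $h_{x}(I(s(x)))$ is an ideal of $A(r(x))$ while $B(x)\cdot I(s(x))$ is a closed submodule of $B(x)$; they cannot be equal as sets. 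The analogous ``identification'' of $I(r(x))$ with $I(r(x))\cdot B(x)$ has the same problem, and the final chain $B(x)\cdot I(s(x))=h_{x}(I(s(x)))=I(r(x))=I(r(x))\cdot B(x)$ therefore does not make sense.

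What you actually need is the standard Rieffel-correspondence identity (this is exactly what the paper cites as \cite{rw:morita}*{Proposition~3.24}): for any $A(r(x))\sme A(s(x))$-\ib\ $X$ and any ideal $J$ of $A(s(x))$, one has $X\cdot J=h_{X}(J)\cdot X$ as closed submodules of $X$. One sees this from the compatibility relation $\lip*<a,b>\,c=a\,\rip*<b,c>$: it gives
\[
h_{X}(J)\cdot X=\clsp\set{\lip*<xj,y>\,z}=\clsp\set{xj\,\rip*<y,z>}=X\cdot J\cdot A(s(x))=X\cdot J.
\]
Applying this with $X=B(x)$ and $J=I(s(x))$, and then invoking the first part $h_{x}(I(s(x)))=I(r(x))$, gives $B(x)\cdot I(s(x))=I(r(x))\cdot B(x)$ in one line. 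Your instinct to route through equation~\eqref{eq:8} was reasonable, but that formula describes the \emph{ideal} $h_{x}(J)\subset A(r(x))$, not the submodule of $B(x)$; the bridge between the two is precisely the bimodule identity above.
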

\begin{remark}
  \label{rem-dots}
  We have included a \verb=\cdot= in the above notation to stress that
  $B(x)\cdot I(s(x))$ is the sub-bimodule corresponding to $I(s(x))$ in
  the \ib\ $B(x)$.  Since the right action is just given by
  multiplication in $\B$, the \verb=\cdot= can be dropped without any
  harm.  In fact, it will be critical in what follows that we are just
  dealing with multiplication in $\B$ which is an associative
  operation (when that makes sense).
\end{remark}

\begin{proof}
  If $P\in\Prim \bigl(A(s(x))\bigr)$ and $I$ is invariant, then
  \begin{align*}
    P\supset I(s(x)) &\iff \bigl(s(x),P\bigr) \supset I \\
    &\iff \bigl(r(x),h_{x}(P)\bigr) \supset I \\
    &\iff h_{x}(P) \supset I(r(x)).
  \end{align*}
  The first assertion follows.  The second assertion follows from
  \cite{rw:morita}*{Proposition~3.24}.
\end{proof}

Now we define
\begin{equation*}
  \BI:=\set{b\in \B: b^{*}b\in I(s(b))},
\end{equation*}
where, as is usual, we write $s(b)$ as a shorthand for $s(p(b))$.
\begin{lemma}
  \label{lem-bsubi}
  If $I$ is an ideal in $A$, then
  \begin{equation*}
    \BI=\set{b\in\B:\text{$a^{*}b\in I(s(b))$ for all $a\in B(p(b))$}}.
  \end{equation*}
  In particular, $b\in \BI$ implies that $b\in B(p(b))\cdot I(s(b))$.
\end{lemma}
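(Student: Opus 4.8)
The plan is to establish the asserted set equality fibrewise, over a fixed $b\in\B$, and then deduce the ``in particular'' clause. Set $x:=p(b)$, so that $s(b)=s(x)$, and recall that $B(x)$ is an $A(r(x))\sme A(s(x))$-\ib{} whose $A(s(x))$-valued inner product is $\rip *<a,c>=a^{*}c$ (the ``right-handed'' version of the identity $\lip *<a,c>=ac^{*}$ used in the proof of Lemma~\ref{lem-char-hx}, and indeed $\rip *<b,b>=b^{*}b$). With this in mind, the inclusion $\supseteq$ is immediate: if $a^{*}b\in I(s(b))$ for every $a\in B(x)$, then the choice $a=b$ gives $b^{*}b\in I(s(b))$, i.e.\ $b\in\BI$. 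So everything comes down to showing that if $b\in\BI$, i.e.\ $b^{*}b\in I(s(x))$, then $a^{*}b\in I(s(x))$ for every $a\in B(x)$.

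For this, fix $a\in B(x)$ and put $c:=aa^{*}\in A(r(x))$, a positive element with $\|c\|=\|a\|^{2}$. Using that $\B$ has an associative multiplication wherever it is defined, we compute $(a^{*}b)^{*}(a^{*}b)=b^{*}c b=\bigl(c^{1/2}\cdot b\bigr)^{*}\bigl(c^{1/2}\cdot b\bigr)=\rip *<c^{1/2}\cdot b,c^{1/2}\cdot b>$, where $c^{1/2}$ is the square root of $c$ in $A(r(x))$. Since $A(r(x))$ acts on the left of the Hilbert $A(s(x))$-module $B(x)$ by adjointable operators, the inequality recalled in the proof of Lemma~\ref{lem-norm-ineq} yields $\rip *<c^{1/2}\cdot b,c^{1/2}\cdot b>\le\|c^{1/2}\|^{2}\rip *<b,b>=\|a\|^{2}\,b^{*}b$. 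As $b^{*}b\in I(s(x))$ and the ideal $I(s(x))$ is hereditary, it follows that $0\le(a^{*}b)^{*}(a^{*}b)\in I(s(x))$, and hence $a^{*}b\in I(s(x))$ because a closed two-sided ideal of a \cs-algebra contains any element whose modulus it contains. This proves the set equality.

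For the ``in particular'' statement, suppose $b\in\BI$ and choose an approximate identity $(e_{\lambda})$, with $0\le e_{\lambda}\le1$, for the \cs-subalgebra of $I(s(x))$ generated by $b^{*}b$. Then $\|b-b\cdot e_{\lambda}\|^{2}=\|(\id-e_{\lambda})\,b^{*}b\,(\id-e_{\lambda})\|\le\|b^{*}b-e_{\lambda}b^{*}b\|\to0$, so $b\cdot e_{\lambda}\to b$ in $B(x)$; since each $b\cdot e_{\lambda}$ lies in the closed submodule $B(x)\cdot I(s(x))$ (closed by Remark~\ref{rem-cohen}), so does $b$. I expect the one genuine point is the operator inequality $b^{*}(aa^{*})b\le\|a\|^{2}b^{*}b$: the rest is a matter of reading $a^{*}b$ as an $A(s(x))$-valued inner product on the \ib{} $B(x)$, keeping careful track of which products in $\B$ are legitimate, and invoking the standard facts that closed ideals of \cs-algebras are hereditary and that the Rieffel-correspondence submodule $B(x)\cdot I(s(x))$ is closed.
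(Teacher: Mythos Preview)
Your argument is correct. The paper's own proof is a single-sentence citation to \cite{rw:morita}*{Lemma~3.23}, which states precisely that for a right Hilbert $B$-module $X$ and a closed ideal $I\lhd B$, the conditions $\rip *<x,x>\in I$, $\rip *<y,x>\in I$ for all $y\in X$, and $x\in X\cdot I$ are equivalent; your proof is an explicit in-context verification of that lemma, using the same ingredients (the Hilbert-module inequality $\rip *<Tx,Tx>\le\|T\|^{2}\rip *<x,x>$, hereditariness of closed ideals, and an approximate-identity argument for the final inclusion). So the approaches coincide, with yours being self-contained rather than deferring to the reference.
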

\begin{remark}
  \label{rem-always-ib}
  Note that $B(x)\cdot I(s(b))$ is always an \ib\ between $I(s(b))$
  and the ideal of $A(r(x))$ corresponding to $I(s(b ))$ under the
  Rieffel correspondence \cite{rw:morita}*{Proposition~3.25}.
\end{remark}

\begin{proof}
  Since $B(p(b))$ is a right Hilbert $A(s(b))$-module, this result
  follows from \cite{rw:morita}*{Lemma~3.23}.
\end{proof}

\begin{prop}
  \label{prop-bi}
  Suppose that $p:\B\to G$ is a Fell bundle over a locally compact
  Hausdorff groupoid $G$ and that $I$ is an ideal in the \cs-algebra
  $A=\sa_{0}(\go;\B)$. Let $\BI=\set{b\in\B:b^{*}b\in I(s(b))}$ and
  $p_{I}=p\restr {\BI}$.  Then $p_{I}:\BI\to G$ is an \usc-Banach
  bundle with fibres $\BI(x)=B(x)\cdot I(s(b))$.\footnote{It is
    possible --- even likely --- that some of the fibres $\BI(x)$ are
    the zero space.}  If $I$ is $G$-invariant, then $\BI$ is a Fell
  bundle with the operations inherited from $\B$.
\end{prop}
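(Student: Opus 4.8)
The plan is to verify the hypotheses of the Hofmann--Fell theorem (Theorem~\ref{thm-build-bundle}) for the family of Banach spaces $\BI(x) := B(x)\cdot I(s(x))$, then check that the resulting topology on $\BI$ coincides with the subspace topology inherited from $\B$, and finally verify the Fell bundle axioms fibrewise. First I would fix a suitable space of sections: for each $b\in\sa_{c}(G;\B)$ and each $c\in I$ consider the section $x\mapsto b(x)\cdot c(s(x)) = b(x)\,c(s(x))$, using that $c\circ s$ is a (bounded continuous) section of $s^{*}\A$ and that multiplication $\B^{(2)}\to\B$ is continuous; call $\Gamma_{I}$ the linear span of such sections together with $\sa_{c}(G;\BI)$-type combinations. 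By Remark~\ref{rem-cohen} and Lemma~\ref{lem-bsubi}, the values $\set{b(x)c(s(x)):b\in\sa_{c}(G;\B),\ c\in I}$ already span a dense subspace of $B(x)\cdot I(s(x)) = \BI(x)$, so hypothesis (b) holds; hypothesis (a) holds because each such section is continuous into $\B$ (hence its norm is upper semicontinuous, being upper semicontinuous already in $\B$). This gives a unique \usc-Banach bundle topology on $\BI$ with $\Gamma_{I}\subset\sa(G;\BI)$.

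Next I would argue that this bundle topology agrees with the relative topology from $\B$. One inclusion is automatic: the identity $\BI\hookrightarrow\B$ is continuous because sections in $\Gamma_{I}$ are continuous into $\B$ and, by \cite{wil:crossed}*{Proposition~C.20} (convergence in the total space), continuity of a bundle map is detected on a dense-in-each-fibre family of sections. For the reverse, one uses that $\BI$ is closed in $\B$: indeed $\BI = \set{b\in\B: b^{*}b\in I(s(b))}$, and since $b\mapsto b^{*}b$ is continuous $\B\to\A$ (as in Remark~\ref{rem-cts-bdle}) while the $I(u)$ assemble into a closed subset of $\A$ (because $I$ is an ideal, $\set{(u,c(u)):u\in\go,\ c\in I}$ is closed in the total space of $\A$), the set $\BI$ is closed; a closed subset of an \usc-Banach bundle, with the subspace topology, is itself an \usc-Banach bundle, and by uniqueness in Theorem~\ref{thm-build-bundle} the two topologies coincide. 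This also pins down the fibres as $\BI(x)=B(x)\cdot I(s(x))$, as claimed (which can also be seen directly: $b^{*}b\in I(s(b))$ iff $b\in B(p(b))\cdot I(s(b))$ by Lemma~\ref{lem-bsubi}).

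Finally, assuming $I$ is $G$-invariant, I would check the Fell bundle axioms (a)--(e) of \cite{muhwil:dm08}*{Definition~1.1} for $\BI$ with the inherited operations. The involution restricts: if $b\in\BI(x)$ then $b^{*}\in B(x^{-1})\cdot I(s(x^{-1}))$ since $B(x)\cdot I(s(x)) = I(r(x))\cdot B(x)$ by Lemma~\ref{lem-key-ix} and $(I(r(x))\cdot B(x))^{*} = B(x)^{*}\cdot I(r(x)) = B(x^{-1})\cdot I(s(x^{-1}))$. Multiplication restricts: for $b\in\BI(x)$, $b'\in\BI(y)$ with $(x,y)\in G^{(2)}$, write $b = c\cdot b_{0}$ with $c\in I(r(x))$, $b_{0}\in B(x)$ (Remark~\ref{rem-cohen}); then $bb' = c(b_{0}b')\in I(r(x))\cdot B(xy) = B(xy)\cdot I(s(xy)) = \BI(xy)$, using associativity in $\B$ and again Lemma~\ref{lem-key-ix}. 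Saturation of $\BI$, i.e.\ $\BI(x)\BI(y)=\BI(xy)$, follows from saturation of $\B$ together with $I(r(x))\cdot B(xy)\cdot I(s(xy)) = I(r(x))\cdot B(xy)$ (the right factor is redundant since $I(r(x))\cdot B(xy)$ is already the sub-bimodule over $I(s(xy))$ by Remark~\ref{rem-always-ib}) and nondegeneracy of the $I(s(y))$-action. The remaining axioms --- that each $\BI(u)$, $u\in\go$, is a \cs-algebra (namely the ideal $I(u)\subseteq A(u)$) and that each $\BI(x)$ is an $\BI(r(x))\sme\BI(s(x))$-imprimitivity bimodule --- are exactly the statement that the Rieffel correspondence sends the ideal $I(s(x))$ to the ideal $I(r(x))$ (Lemma~\ref{lem-key-ix}), so the restricted inner products land in the right ideals and are full. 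Continuity of the restricted operations is inherited from $\B$ since $\BI$ carries the subspace topology. I expect the main obstacle to be the topological step --- establishing that the abstractly-constructed bundle topology on $\BI$ is the subspace topology, equivalently that $\sa_{c}(G;\BI)$ really is spanned (densely, fibrewise) by products $b\cdot c$ with $b\in\sa_{c}(G;\B)$ and $c\in I$ --- rather than the purely algebraic fibrewise verification, which is routine given Lemma~\ref{lem-key-ix}.
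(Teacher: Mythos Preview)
Your algebraic verification that $\BI$ is closed under multiplication and involution (via Lemma~\ref{lem-key-ix}) and that axioms~(d) and~(e) of \cite{muhwil:dm08}*{Definition~1.1} hold is correct and matches the paper. The problem is precisely where you anticipated, in the topological step, and it is a genuine gap: both claims supporting your ``reverse'' inclusion are false. First, $\BI$ need not be closed in $\B$. Take $G=\go=\R$, $\B=\A=\R\times\C$ (so $A=C_{0}(\R)$), and $I=\set{f\in A:f(0)=0}$; then $I(u)=\C$ for $u\ne 0$ while $I(0)=\set{0}$, so $\BI=\bigl((\R\setminus\set{0})\times\C\bigr)\cup\set{(0,0)}$, and the sequence $(1/n,1)\in\BI$ converges in $\B$ to $(0,1)\notin\BI$. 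Second, even granting closedness, a closed sub-bundle with the subspace topology is not automatically an \usc-Banach bundle: the issue is openness of the restricted bundle map, and the paper itself warns in a footnote that ``the restriction of an open map need not be open.'' So the appeal to uniqueness in Theorem~\ref{thm-build-bundle} does not go through as written.

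The paper bypasses Hofmann--Fell entirely and verifies the \usc-Banach bundle axioms for $\BI$ with the subspace topology directly; the only nontrivial one is openness of $p_{I}$, handled via \cite{wil:crossed}*{Proposition~1.15}. Given $b\in\BI$ and a net $x_{i}\to p(b)$, write $b=b'\cdot a(s(b))$ with $a\in I$ (Cohen factorization, Remark~\ref{rem-cohen}), use openness of $p$ to pass to a subnet and lift to $b'_{i}\to b'$ with $p(b'_{i})=x_{i}$, and set $b_{i}:=b'_{i}\cdot a(s(x_{i}))\in\BI(x_{i})$; continuity of multiplication in $\B$ gives $b_{i}\to b$. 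Your Hofmann--Fell route can be repaired: replace the closedness argument by a direct check, via \cite{wil:crossed}*{Proposition~C.20}, that convergence in $\B$ of a net in $\BI$ implies convergence in your Hofmann--Fell topology (pick a section in $\Gamma_{I}$ through the limit point and use upper semicontinuity of the norm). But carrying that out amounts to essentially the same lifting as in the paper's argument.
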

\begin{proof}
  It is fairly straightforward to check that $p_{I}:\BI\to G$ is an
  \usc-Banach bundle with the exception of verifying that $p_{I}$ is
  open.\footnote{Keep in mind that the restriction of an open map need
    not be open.}  To prove that, we'll use
  \cite{wil:crossed}*{Proposition~1.15}.  Suppose that $b\in\BI$ and
  that $x_{i}\to p(b)$.  It will suffice to find, after passing to a
  subsequence and relabeling, elements $b_{i}\in \BI$ such that
  $b_{i}\to b$ and $p(b_{i})=x_{i}$.

  Since $\BI(p(b))=B(p(b))\cdot I(s(b))$, in view of
  Remark~\ref{rem-cohen} we can suppose that
  $b=b'\cdot a(s(b))$ where $a\in I$ (and we write $a(u)$ for the
  image of $a$ in $I(u)$). Since $p$ is open, we can pass to a
  subsequence, relabel, and find $b_{i}'\to b'$ such that
  $p(b_{i}')=x_{i}$.  But $a(s(x_{i}))\to a(s(b))$ in $\B$, so the
  continuity of multiplication implies that $b_{i}' \cdot
  a(s(x_{i}))\to b'\cdot a(s(b))=b$.  But $b_{i}:=b_{i}'\cdot
  a(s(x_{i}))\in\BI$. This completes the proof that $p_{I}$ is open
  and the proof that $p_{I}$ is an \usc-Banach bundle.

  Now we assume that $I$ is $G$-invariant. Suppose that
  $(b,b')\in\BI^{(2)}$.  Then $b\in B(p(b))\cdot I(s(b))$, $b'\in
  B(p(b'))\cdot I(s(b'))$ and $s(b)=r(b')$.  Thus using
  Lemma~\ref{lem-key-ix},
  \begin{align*}
    bb'&\in B(p(b))I(s(b))B(p(b'))I(s(b')) \\
    & \subset B(p(b))B(p(b'))I(s(b'))^{2} \\
    & \subset B(p(bb')I(s(bb')).
  \end{align*}
  Thus $bb'\in \BI$ and $\BI$ is closed under multiplication.
  Similarly, $b^{*}\in I(s(b))B(p(b)^{-1})= B(p(b^{*}))I(s(b^{*}))$,
  and $b^{*}\in \BI$.

  Therefore axioms (a)--(c) of \cite{muhwil:dm08}*{Definition~1.1} are
  clearly satisfied.  Since $\BI(u)=I(u)$, axiom~(d) is satisfied, and
  \cite{rw:morita}*{Proposition~3.25} together with
  Lemma~\ref{lem-key-ix} imply that $\BI(x)$ is an $I(r(x))\sme
  I(s(x))$-\ib.  Thus (e) holds as well.
\end{proof}

{\emergencystretch=50pt Now we let $\bqi(x)$ be the quotient module
  $B(x)/\bi(x)$.  If $J$ is the ideal of $A(r(x))$ corresponding to
  $I(s(x))$ under the Rieffel correspondence (so that $J=I(r(x))$ if
  $I$ is invariant by Lemma~\ref{lem-key-ix}), then $\bqi(x)$ is  a
  $A(r(x))/J\sme A(s(x))/I(s(x))$-\ib\ 
  by \cite{wil:crossed}*{Proposition~3.25}.
  Let
  \begin{equation*}
    \BqI:=\coprod_{x\in G}\bqi(x)
  \end{equation*}
  and form the bundle $p^{I}:\BqI\to G$.  If $f\in\sa_{c}(G;\B)$, then
  we define a section $q(f)$ of $p^{I}:\BqI\to G$ by $q(f)(x)=[f(x)]$,
  where as usual, $[b]$ denotes the class of $b\in B(x)$ in
  $\bqi(x)$.\par}

\begin{prop}
  \label{prop-bqi}
  Suppose that $p:\B\to G$ is a Fell bundle over a locally compact
  Hausdorff groupoid $G$ and that $I$ is an ideal in the \cs-algebra
  $A=\sa_{0}(\go;\B)$.  Then $\BqI$ has a topology making
  $p^{I}:\BqI\to G$ an \usc-Banach bundle such that
  $\Gamma:=\set{q(f):f\in\sa_{c}(G;\B)}$ is a dense subspace of
  $\sa_{c}(G;\BqI)$ in the inductive limit topology.  If $I$ is
  $G$-invariant, then $p^{I}:\BqI\to G$ is a Fell bundle with the
  operations induced from $\B$.
\end{prop}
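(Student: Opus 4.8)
The plan is to produce the topology on $\BqI$ via the Hofmann--Fell Theorem (Theorem~\ref{thm-build-bundle}), applied to $\Gamma:=\set{q(f):f\in\sa_{c}(G;\B)}$. Two hypotheses need checking. First, $x\mapsto\|q(f)(x)\|$ must be upper semicontinuous; since $\bqi(x)=B(x)/\bi(x)$ carries the quotient norm and, by Proposition~\ref{prop-bi}, $\BI$ is an \usc-Banach bundle and hence has enough sections, one can write $\|q(f)(x)\|=\inf\set{\|f(x)-g(x)\|:g\in\sa_{c}(G;\BI)}$, an infimum of the upper semicontinuous functions $x\mapsto\|(f-g)(x)\|$ (note $f-g\in\sa_{c}(G;\B)$), hence upper semicontinuous. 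Second, $\set{q(f)(x):f\in\sa_{c}(G;\B)}$ must be dense in $\bqi(x)$; this follows because Fell bundles have enough sections, so $\set{f(x):f\in\sa_{c}(G;\B)}$ is dense in $B(x)$, and its image under the continuous surjection $B(x)\to\bqi(x)$ is then dense in $\bqi(x)$. Theorem~\ref{thm-build-bundle} supplies the topology, with each $q(f)$ a continuous section ($\supp q(f)\subseteq\supp f$). For the density of $\Gamma$ in $\sa_{c}(G;\BqI)$ in the inductive limit topology, I would use that $\Gamma$ is a $C_{c}(G)$-submodule of $\sa_{c}(G;\BqI)$ --- because $\phi\cdot q(f)=q(\phi\cdot f)$ and $\phi\cdot f\in\sa_{c}(G;\B)$ --- which is fibrewise dense by what was just shown, so the claim follows from the usual Stone--Weierstrass argument for \usc-Banach bundles (cf.\ \cite{wil:crossed}*{Appendix~C.2}). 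None of this uses invariance.

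Next, assume $I$ is $G$-invariant and set $[b][b']:=[bb']$, $[b]^{*}:=[b^{*}]$. The substance is well-definedness, and this is precisely where invariance enters: from the ``in particular'' clause of Lemma~\ref{lem-key-ix} (that $I(r(y))B(y)=B(y)I(s(y))$), together with the saturation and associativity of the multiplication in $\B$, one obtains $\bi(x)B(y)\subseteq\bi(xy)$, $B(x)\bi(y)\subseteq\bi(xy)$ and $\bi(x)^{*}\subseteq\bi(x^{-1})$ for composable $x,y$ (compare the computation in the proof of Proposition~\ref{prop-bi}). The identity $b_{1}b_{1}'-b_{2}b_{2}'=(b_{1}-b_{2})b_{1}'+b_{2}(b_{1}'-b_{2}')$ then shows that $[bb']$ depends only on $[b]$ and $[b']$, and the involution descends for the analogous reason. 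Bilinearity, associativity and the involution identities pass from $\B$ to $\BqI$ because $q$ is fibrewise linear, multiplicative and $*$-preserving; and $\BqI$ is saturated because $q\restr{B(xy)}$ is a continuous surjection while $\clsp\set{bb':b\in B(x),\ b'\in B(y)}=B(xy)$.

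For continuity of the bundle operations I would first check that $q\colon\B\to\BqI$ is continuous and open. Continuity: for $f\in\sa_{c}(G;\B)$ the function $b\mapsto\|[b-f(p(b))]\|=\inf\set{\|b-(f-g)(p(b))\|:g\in\sa_{c}(G;\BI)}$ is upper semicontinuous on the total space $\B$ (again an infimum of continuous functions), so $q$ pulls the basic tubes of $\BqI$ back to open subsets of $\B$. Openness: by the definition of the quotient norm, the $q$-image of a basic tube $\set{b\in\B:p(b)\in U,\ \|b-f(p(b))\|<\epsilon}$ of $\B$ contains the corresponding tube of $\BqI$ about each of its points. Since $\B^{(2)}=(q\times q)^{-1}({\BqI}^{(2)})$, it follows that $q\times q$ restricts to an open continuous surjection $\B^{(2)}\to{\BqI}^{(2)}$, hence a topological quotient map; as the continuous map $\B^{(2)}\to\B\xrightarrow{q}\BqI$, $(b,b')\mapsto[bb']$, factors through it, the multiplication on $\BqI$ is continuous, and the involution is handled the same way.

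Finally, axioms (d) and (e) of \cite{muhwil:dm08}*{Definition~1.1}. For (d): over $u\in\go$, $\bqi(u)=B(u)/\bi(u)=A(u)/I(u)$, which Lemma~\ref{lem-cox} identifies with $(A/I)(u)$ --- a \cs-algebra whose structure is exactly the one induced from $\B$; the same lemma, with the uniqueness in Theorem~\ref{thm-build-bundle}, also identifies $\sa_{0}(\go;\BqI)$ with the $C_{0}(\go)$-algebra $A/I$ (separable, so $\BqI$ is a separable Fell bundle). For (e): as recorded in the paragraph preceding the Proposition, $\bqi(x)$ is an $A(r(x))/J\sme A(s(x))/I(s(x))$-\ib, where $J$ is the Rieffel correspondent of $I(s(x))$, and Lemma~\ref{lem-key-ix} forces $J=I(r(x))$ when $I$ is $G$-invariant, so $\bqi(x)$ is an $\bqi(r(x))\sme\bqi(s(x))$-\ib\ whose module actions and inner products are restrictions of the operations built above. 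The step I expect to be the main obstacle is the bookkeeping around the quotient bundle --- the upper semicontinuity of the quotient norm (on $G$ and on $\B$), the continuity and openness of $q$, and the inductive-limit density of $\Gamma$; once those topological points are secured, the Fell-bundle axioms drop out of Lemmas~\ref{lem-key-ix} and~\ref{lem-cox}.
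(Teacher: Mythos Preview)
Your proposal is correct and follows the same architecture as the paper—apply Theorem~\ref{thm-build-bundle} to $\Gamma$, check density via a $C_{c}(G)$-module argument, then verify the Fell-bundle axioms using Lemma~\ref{lem-key-ix}—but two of the technical steps are handled differently. For the upper semicontinuity of $x\mapsto\|q(f)(x)\|$, the paper exploits the Hilbert-module structure on $\bqi(x)$ to write $\|q(f)(x)\|^{2}=\|q_{I}(f(x)^{*}f(x))(s(x))\|$, and then invokes Lemma~\ref{lem-cox} to recognise this as a fibre norm in the $C_{0}(\go)$-algebra $A/I$; your infimum-over-sections argument is more direct and relies only on Proposition~\ref{prop-bi} (that $p_{I}:\BI\to G$ is already an \usc-Banach bundle, hence has enough sections). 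For continuity of multiplication, the paper establishes only that $q:\B\to\BqI$ is continuous and then argues with sections, Lemma~\ref{lem-norm-ineq}, and \cite{wil:crossed}*{Proposition~C.20}; you instead prove $q$ is open (the key observation being $q\bigl(W_{\B}(f,U,\epsilon)\bigr)=W_{\BqI}(q(f),U,\epsilon)$) and descend through the quotient map $q\times q$. Your route sidesteps Lemma~\ref{lem-cox} in the first half and Lemma~\ref{lem-norm-ineq} in the second, at the price of the openness bookkeeping; the paper's route trades that bookkeeping for more use of the ambient algebraic structure.

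One small slip: in the continuity argument for $q$ you describe the functions $b\mapsto\|b-(f-g)(p(b))\|$ as ``continuous'', but on an \usc\ bundle the norm is only upper semicontinuous. This is harmless—an infimum of upper semicontinuous functions is still upper semicontinuous—so the conclusion stands.
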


\begin{proof}[Proof of Proposition~\ref{prop-bqi}]
  Recall that $\bqi(x)$ is a quotient imprimitivity module, which is
  in particular a right Hilbert $A(s(x))/I(s(x))$-module, where the
  Hilbert module operations are induced from the Fell bundle
  operations on $\B$.  Therefore,
  \begin{equation*}
    \|q(f)(x)\|^{2} = \|\bar q (f(x)^{*}f(x))\|,
  \end{equation*}
  where $\bar q:A\to A/I(s(x))$ is the quotient map.  But
  Lemma~\ref{lem-cox} implies that $A/I$ is a $C_{0}(\go)$-algebra
  with fibre over $u$, $(A/I)(u)$, naturally isomorphic to
  $A(u)/I(u)$.  Therefore,
  \begin{equation*}
    \|q(f)(x)\|^{2}= \|q_{I}(f(x)^{*}f(x))(s(x))\|,
  \end{equation*}
  where $q_{I}:A\to A/I$ is the quotient map.  Therefore, $x\mapsto
  \|q(f)(x)\|$ is the composition of the continuous map $x\mapsto
  f(x)^{*}f(x)$ from $G$ to $A$ with the upper semicontinuous map
  $q_{I}(a) \mapsto \|q_{I}(a)(u)\|$ coming from the
  $C_{0}(\go)$-algebra structure
  (\cite{wil:crossed}*{Proposition~C.10}).  Therefore $x\mapsto
  \|q(f)(x)\|$ is upper semicontinuous and we can apply
  Theorem~\ref{thm-build-bundle} to give $\BqI$ a topology such that
  $p^{I}:\BqI\to G$ is an \usc-Banach bundle such that $\Gamma:=
  \set{q(f):g\in\sa_{c}(G;\B)} \subset \sa_{c}(G;\BqI)$.  Furthermore,
  since $\Gamma$ is a $C_{0}(G)$-module, it follows from
  \cite{muhwil:dm08}*{Lemma~A.4} that $\Gamma$ is dense in
  $\sa_{c}(G;\BqI)$ in the inductive limit topology. This establishes
  all but the last assertion.

  Now assume that $I$ is $G$-invariant.  Let $(x,y)\in G^{(2)}$.
  Suppose that $a,a'\in B(x)$, that $b,b'\in B(y)$ and that
  $a'=a+a''$, $b'=b+b''$ with $a''\in \bi(x)$ and $b''\in\bi(y)$.
  Since $s(x)=r(y)$, we can apply Lemma~\ref{lem-key-ix} to calculate
  as follows:
  \begin{align*}
    a'b'&= ab+a''b+ab''+a''b'' \\
    &\in ab +B(x)I(r(y))B(y) + B(x)B(y)I(s(y)) +
    B(x)I(r(y))B(y)I(s(y)) \\ 
    &\subset ab + B(xy)I(s(y)).
  \end{align*}
  Therefore, we get a well-defined multiplication on $(\BqI)^{(2)}$.

  To see that multiplication is continuous from $(\BqI)^{(2)}$ to
  $\BqI$ we first want to establish that the quotient map $a\mapsto
  [a]$ is continuous from $\B$ to $\BqI$.  To this end, suppose that
  $a_{i}\to a$ in $\B$.  Let $f\in \gcgb$ be such that $f(p(a))=a$.
  Then
  \begin{equation*}
    \|f(p(a_{i}))-a_{i}\|\to 0.
  \end{equation*}
  But then
  \begin{equation*}
    \|q(f)(p(a_{i}))-[a_{i}]\|\to 0.
  \end{equation*}
  Since $q(f)\in\sa_{c}(G;\BqI)$, we have $[a_{i}]\to[a]$ by
  \cite{wil:crossed}*{Proposition~C.20}.

  Now suppose that $([a_{i}],[b_{i}])\to ([a],[b])$ in $(\BqI)^{(2)}$
  with 
  $\bigl(p^{I}([a_{i}]),p^{I}([b_{i}])\bigr)=(x_{i},y_{i})$ 
converging to $(x,y)$ in $G^{(2)}$.  Let $f,g\in\gcgb$ be such that
  $q(f)(x)=[a]$ and $q(g)(x)=[b]$.  By the above observation, we have
  \begin{equation*}
    q(f)(x_{i})q(g)(y_{i})\to [a][b].
  \end{equation*}
  However, using Lemma~\ref{lem-norm-ineq}, we see easily that
  \begin{equation*}
    \|q(f)(x_{i})q(g)(y_{i})-[a_{i}][b_{i}]\|\to 0.
  \end{equation*}
  Therefore $[a_{i}][b_{i}]\to [a][b]$ by
  \cite{wil:crossed}*{Proposition~C.20}, and multiplication is
  continuous.  The argument that $[a]\mapsto [a^{*}]$ is a well
  defined and continuous involution is similar and we omit the
  details.

  It is now straightforward to see that $p^{I}:\BqI\to G$ is a Fell
  bundle as claimed: properties (a)--(c) of
  \cite{muhwil:dm08}*{Definition~1.1} are clearly satisfied.  On the
  other hand, if $u\in\go$, then $\bqi(u)$ is the \cs-algebra
  $A(u)/I(u)=(A/I)(u)$, while if $x\in G$, then $\bqi(x)$ is a
  $(A/I)(r(x))\sme (A/I)(s(x))$-\ib\ with respect to the quotient
  operations.  Therefore properties (d)~and (e) are also satisfied.
\end{proof}

\subsection{The Exact Sequence}
\label{sec:exact-sequence}

In this section, we are always assuming that $I$ is a
\emph{$G$-invariant ideal} 
in the \cs-algebra $A=\sa_{0}(\go;\B)$ sitting over $\go$ in a Fell
bundle $p:\B\to G$ over a locally compact Hausdorff groupoid $G$.
Of course we will use the properties of $\BI$ and $\BqI$ from
Propositions \ref{prop-bi}~and \ref{prop-bqi}.

We clearly have an injective $*$-homomorphism
\begin{equation*}
  \iota:\sa_{c}(G;\BI)\to \gcgb
\end{equation*}
given by inclusion.  Also $q\mapsto q(f)$ is  a $*$-homomorphism
\begin{equation*}
  q:\gcgb\to \sa_{c}(G;\BqI).
\end{equation*}
Proposition~\ref{prop-bqi} implies that $q(\gcgb)$ is dense in
$\sa_{c}(G;\BqI)$ in the inductive limit topology.  Therefore $q$ has
dense range when viewed a map into $\cs(G,\BqI)$.
\begin{lemma}
  \label{lem-i}
  The map $\iota$ extends to an isomorphism of $\cs(G,\BI)$ onto an
  ideal $\Ex(I):=\overline{\iota(\sa_{c}(G;\BI))}$ in $\gcgb$.
\end{lemma}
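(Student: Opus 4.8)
The map $\iota:\sa_{c}(G;\BI)\to\gcgb$ is an inclusion of $*$-algebras, so the first task is to see it is continuous for the relevant $C^{*}$-norms, hence extends to a $*$-homomorphism $\bar\iota:\cs(G,\BI)\to\cs(G,\B)$. The cleanest route is the Disintegration Theorem for Fell bundles \cite{muhwil:dm08}*{Theorem~4.13}: every $I$-norm-bounded $*$-representation $L$ of $\sa_{c}(G;\BI)$ disintegrates, and composing with $\iota$ on the larger algebra one checks that $L$ is dominated by the universal norm on $\gcgb$. More directly, one observes that a representation of $\cs(G,\B)$ restricts to a representation of $\sa_{c}(G;\BI)$, and conversely by disintegration a representation of $\cs(G,\BI)$ comes from a representation of the bundle $\BI$, which --- since $\BI$ sits inside $\B$ with the same operations --- extends (after inducing up, or by a standard Hilbert-module argument) to a representation of $\B$ and hence of $\cs(G,\B)$; this gives equality of the two $C^{*}$-norms on $\sa_{c}(G;\BI)$, so $\bar\iota$ is isometric, hence injective.

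Once $\bar\iota$ is an isometric $*$-homomorphism, its image $\Ex(I):=\overline{\iota(\sa_{c}(G;\BI))}$ is a $C^{*}$-subalgebra of $\cs(G,\B)$ isomorphic to $\cs(G,\BI)$, and it only remains to check $\Ex(I)$ is an ideal. For this it suffices to show $\sa_{c}(G;\BI)$ is an ideal in $\gcgb$ under the convolution product, i.e. that $f*g$ and $g*f$ lie in $\sa_{c}(G;\BI)$ whenever $f\in\sa_{c}(G;\B)$ and $g\in\sa_{c}(G;\BI)$; density and continuity of multiplication then propagate this to the completions. Pointwise this is exactly the kind of computation already carried out in the proof of Proposition~\ref{prop-bi}: for $z\in G$,
\begin{equation*}
  (f*g)(z)=\int_{G} f(x)\,g(x^{-1}z)\,d\lambda^{r(z)}(x),
\end{equation*}
and for each $x$ one has $f(x)\in B(x)$ while $g(x^{-1}z)\in\BI(x^{-1}z)=B(x^{-1}z)\cdot I(s(z))$, so using Lemma~\ref{lem-key-ix} (and $G$-invariance of $I$) $f(x)g(x^{-1}z)\in B(x)B(x^{-1}z)I(s(z))=B(z)I(s(z))=\BI(z)$; since $\BI(z)$ is closed, the integral stays in $\BI(z)$, so $f*g\in\sa_{c}(G;\BI)$. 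The computation for $g*f$ is symmetric, using $g(x)\in B(x)\cdot I(s(x))=I(r(x))\cdot B(x)$ from Lemma~\ref{lem-key-ix}. Thus $\iota(\sa_{c}(G;\BI))$ is a two-sided ideal in $\iota$ is a two-sided $*$-ideal in $\gcgb$, and its closure $\Ex(I)$ is a closed two-sided ideal in $\cs(G,\B)$.

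I expect the main obstacle to be the first step --- verifying that $\bar\iota$ is isometric rather than merely a contraction. The issue is that a priori the universal $C^{*}$-norm on $\sa_{c}(G;\BI)$ could be strictly smaller than the norm inherited from $\cs(G,\B)$, because not every representation of the ambient algebra need restrict to a nondegenerate (or even nonzero) representation of the subalgebra, and conversely one must produce, from a representation of $\cs(G,\BI)$, a representation of $\cs(G,\B)$ that restricts back to it. This is precisely where the Disintegration Theorem is essential: it lets one pass from an abstract bounded representation of $\sa_{c}(G;\BI)$ to a representation of the Banach bundle $\BI$, at which point the inclusion $\BI\hookrightarrow\B$ together with the saturation/imprimitivity-bimodule structure of the fibres (so that $B(x)$ is a full Hilbert $A(s(x))$-module and $\BI(x)$ the sub-bimodule attached to $I(s(x))$) allows the representation to be induced to one of $\B$, giving the norm equality. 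All the surrounding bundle-theoretic facts needed for this have been assembled in Propositions~\ref{prop-bi} and~\ref{prop-bqi}; the remaining work is to assemble them into the disintegration-and-reintegration argument.
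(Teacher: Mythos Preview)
Your proof of the ideal assertion matches the paper's almost verbatim: the pointwise computation using Lemma~\ref{lem-key-ix} is exactly what the paper does (the paper also checks closure under the involution, which you omit but which is routine).

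For the isometry, you correctly identify that the Disintegration Theorem is the key tool and that the hard direction is producing, from a representation $L'$ of $\cs(G,\BI)$, a representation of $\cs(G,\B)$ that restricts to it. But your proposed route and the paper's diverge here. You suggest disintegrating $L'$ over $\BI$ to get a Borel $*$-functor $\hat\pi_{I}$ on $\BI$, and then extending $\hat\pi_{I}$ fibrewise to a $*$-functor on $\B$ using the imprimitivity structure of the fibres. This can be made to work, but the extension step you describe only as ``inducing up, or by a standard Hilbert-module argument'' is where all the content lies: one must show that for $b\in B(x)$ the strong limit $\lim_{\lambda}\pi_{I}(b\cdot e_{\lambda})$ exists (with $\{e_{\lambda}\}$ an approximate identity for $I(s(x))$), that the resulting $\pi$ is multiplicative and $*$-preserving across fibres, and that it is Borel --- none of which is automatic.

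The paper avoids this fibrewise analysis entirely. Having already shown that $\gcgbi$ is a two-sided ideal in $\gcgb$, it takes a faithful representation $L'$ of $\cs(G,\BI)$ on $\H$, sets $\H_{0}=\operatorname{span}\{L'(f)h\}$, and defines $L(g)L'(f)h:=L'(g*f)h$ directly at the level of the section algebras. An approximate-identity argument in $\gcgbi$ shows this is well defined; one then checks the three axioms of a \emph{pre-representation} of $\B$ in the sense of \cite{muhwil:dm08}*{Definition~4.1}, and applies the Disintegration Theorem \emph{once}, to $\B$, to conclude that $L$ is bounded. This yields $\|f\|=\|L'(f)\|=\|L(\iota(f))\|\le\|\iota(f)\|$. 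The contraction $\|\iota(f)\|\le\|f\|$ is obtained, as you say, by restricting irreducible representations of $\cs(G,\B)$. So the paper's argument trades your fibrewise extension for a single global extension via the ideal structure, which is both shorter and sidesteps the measurability issues your approach would have to address.
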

\begin{proof}
  Suppose that $f\in \gcgbi$ and that $g\in\gcgb$.  Then, using
  Lemma~\ref{lem-key-ix}, 
  $f(x)g(x^{-1}y) \in B(x)I(s(x))B(x^{-1}y) = B(y)I(s(y))=\bi(y)$, and
  $f^{*}(x)\in I(s(x))B(x^{-1})=B(x^{-1})I(r(x))=\bi(x^{-1})$.  It now
  follows easily that $\Ex(I)$ is an ideal in $\cs(G,\B)$.  We just
  need to see that $\iota$ is isometric for the universal norms.  Let
  $L$ be an \emph{irreducible} representation of $\cs(G,\B)$.  Then
  either $L(\iota(\gcgbi))=\set0$ or 
  $L$ defines  a representation, $L'$ of $\gcgbi$ in the sense of
  \cite{muhwil:dm08}*{Definition~4.7} (which is
  nondegenerate because $L$ is irreducible).  Then
  \begin{equation*}
    \|L(\iota(f))\|=\|L'(f)\|\le \|f\|.
  \end{equation*}
Since $\|L(\iota(f))\|\le \|f\|$ holds for all irreducible
representations, we have
\begin{equation*}
  \|\iota(f)\|\le\|f\|.
\end{equation*}

Now let $L'$ be a faithful representation of $\cs(G,\BI)$ on $\H$.
Let $\H_{0}$ be the dense subspace
\begin{equation*}
  \H_{0}=\operatorname{span}\set{L'(f)h:\text{$f\in \gcgbi$ and
      $h\in\H$}}. 
\end{equation*}
Suppose that $f_{1},\dots,f_{k}\in\gcgbi$ and that
$h_{1},\dots,h_{k}\in\H$ are such that
\begin{equation}
  \label{eq:16}
  \sum_{i=1}^{k}L'(f)h_{i}=0.
\end{equation}
Let $g\in\gcgb$.  Let $\set{e_{j}}$ be an approximate identity for
$\gcgbi$ in the inductive limit topology (see
\cite{muhwil:dm08}*{Proposition~5.1}).  Then, since convolution is
continuous in the inductive limit topology,  $g*e_{j}*f_{i}\to
g*f_{i}$ in the inductive limit topology.  Thus, using \eqref{eq:16},
\begin{equation*}
  \sum_{i=1}^{k}L'(g*f_{i})h_{i} = \lim_{j} \sum_{i=1}^{k}
  L'(g*e_{j}*f_{i})h_{i} 
= \lim_{j} L'(g*e_{j})\sum_{i=1}^{k}L'(f_{i})h_{i} 
=0.
\end{equation*}
Thus we get a well-defined
homomorphism $L$ from $\gcgb$ to the linear operators
$\operatorname{Lin}(\H_{0})$ on $\H_{0}$ characterized by
\begin{equation*}
  L(g)L'(f)h:=L'(g*f)h.
\end{equation*}
(Notice that $L(\iota(f))$ is just the restriction of $L'(f)$ to
$\H_{0}$.) We claim that $L$ is what we called a
\emph{pre-representation} of $\B$ in
\cite{muhwil:dm08}*{Definition~4.1}.  To see this, notice that if
$g_{i}\to g$ in the inductive limit topology on $\gcgb$, then
$g_{i}*f\to g*f$ in the inductive limit topology on $\gcgbi$ for any
$f\in \gcgbi$.  Therefore
\begin{equation*}
  g\mapsto \ip(L(g)v|w)
\end{equation*}
is continuous in the inductive limit topology for all $v,w\in
\H_{0}$.  Therefore condition~(a) of
\cite{muhwil:dm08}*{Definition~4.1} is satisfied.  To verify
condition~(b), just note that
\begin{align*}
  \bip(L(g)L'(f)h|{L'(f')h'}) &= \bip(L'(g*f)h|{L'(f')h'}) \\
\intertext{which, since $L'$ is a $*$-homomorphism and since
$(g*f)^{*}=f^{*}*g^{*}$, is}
&= \bip(h|{L'(f^{*}*g^{*}*f')h'})\\
&= \bip(L'(f)h|{L(g^{*})L'(f')h'}).
\end{align*}
Lastly, condition~(c) follows easily from the existence of an
approximate identity for $\gcgb$ in the inductive limit topology.
Now the Disintegration Theorem (\cite{muhwil:dm08}*{Theorem~4.13})
implies that $L$ can be extended to a bounded representation of
$\cs(G,\B)$.  But then
\begin{equation*}
  \|f\|=\|L'(f)\|=\|L(\iota(f))\|\le \|\iota(f)\|.
\end{equation*}
Thus $\iota$ is isometric as claimed.
\end{proof}

\begin{lemma}
  \label{lem-q}
  The $*$-homomorphism $q$ is bounded and extends to a surjective
  homomorphism $q$ of $\cs(G,\B)$ onto $\cs(G,\BqI)$.
\end{lemma}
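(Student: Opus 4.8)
The plan is to establish boundedness of $q$ by dualizing: every representation of $\cs(G,\BqI)$ should pull back along $q$ to a representation of $\gcgb$, which forces $\|q(f)\|_{\cs(G,\BqI)}\le\|f\|_{\cs(G,\B)}$; surjectivity then comes for free from the density already proved in Proposition~\ref{prop-bqi}. The one preliminary fact needed is that $q$ is harmless on norms and supports: for $f\in\gcgb$ and $x\in G$ we have $\|q(f)(x)\|=\|[f(x)]\|\le\|f(x)\|$ since $\bqi(x)=B(x)/\bi(x)$ carries the quotient norm, and plainly $\supp q(f)\subseteq\supp f$; consequently $q:\gcgb\to\gcgbqi$ is continuous for the inductive limit topologies.

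For the main step, fix a representation $L$ of $\cs(G,\BqI)$ on a Hilbert space $\H$. Its restriction to $\gcgbqi$ is a representation of $\gcgbqi$ in the sense of \cite{muhwil:dm08}*{Definition~4.7}: it is nondegenerate, and since the inductive limit topology on $\gcgbqi$ is finer than the norm topology of $\cs(G,\BqI)$, the restriction of the contractive map $L$ is continuous for the inductive limit topology. I then claim that $\pi:=L\circ q:\gcgb\to B(\H)$ is a representation of $\gcgb$ in the sense of \cite{muhwil:dm08}*{Definition~4.7}. It is a $*$-homomorphism because $q$ is (as recorded just before the statement of the lemma); it is continuous for the inductive limit topology because both $q$ and $L\restr{\gcgbqi}$ are; and it is nondegenerate because $q(\gcgb)$ is dense in $\gcgbqi$ in the inductive limit topology by Proposition~\ref{prop-bqi}, hence dense in $\cs(G,\BqI)$ for the $\cs$-norm, so that $\operatorname{span}\pi(\gcgb)\H=\operatorname{span}L(q(\gcgb))\H$ is dense in $\H$. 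Since the universal norm on $\cs(G,\B)$ dominates $\|\pi(f)\|$ for every representation $\pi$ of $\gcgb$ of this kind, we obtain $\|L(q(f))\|=\|\pi(f)\|\le\|f\|_{\cs(G,\B)}$ for all $f\in\gcgb$. Taking the supremum over all representations $L$ of $\cs(G,\BqI)$ gives $\|q(f)\|_{\cs(G,\BqI)}\le\|f\|_{\cs(G,\B)}$, so $q$ is bounded and extends to a $*$-homomorphism $q:\cs(G,\B)\to\cs(G,\BqI)$.

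For surjectivity, the extended $q$ has closed range since it is a homomorphism of \cs-algebras, and that range contains $q(\gcgb)$, which is dense in $\gcgbqi$ in the inductive limit topology and hence dense in $\cs(G,\BqI)$; a closed dense \cs-subalgebra being the whole algebra, $q$ is onto. I expect the only point requiring genuine care to be the verification that $L\circ q$ meets all the hypotheses of \cite{muhwil:dm08}*{Definition~4.7}, in particular that nondegeneracy survives the pullback, since everything subtle about the quotient bundle $\BqI$ itself has already been absorbed into Proposition~\ref{prop-bqi}. If one would rather not lean on the exact form of the definition of the universal norm, an alternative is to check that $L\circ q$ is a \emph{pre-representation} of $\B$ in the sense of \cite{muhwil:dm08}*{Definition~4.1} and then invoke the Disintegration Theorem \cite{muhwil:dm08}*{Theorem~4.13} to obtain $\|L(q(f))\|\le\|f\|_{\cs(G,\B)}$, exactly as in the proof of Lemma~\ref{lem-i}.
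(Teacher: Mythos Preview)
Your argument is correct, and it takes a somewhat different route than the paper's. You work entirely at the level of representations of the section algebras: given a representation $L$ of $\cs(G,\BqI)$, you verify directly that $L\circ q$ is a representation of $\gcgb$ in the sense of \cite{muhwil:dm08}*{Definition~4.7} (using that $q$ is inductive-limit continuous and that $q(\gcgb)$ is dense in $\gcgbqi$), and then invoke the defining property of the universal norm. The paper instead takes a faithful representation $L'$ of $\cs(G,\BqI)$, applies the Disintegration Theorem \cite{muhwil:dm08}*{Theorem~4.13} to write it as the integrated form of a strict representation $(\mu,\go*\HH,\hat\pi')$ of $\BqI$, and then \emph{lifts} $\hat\pi'$ to a strict representation $\hat\pi$ of $\B$ by setting $\pi(b):=\bar\pi([b])$; integrating $\hat\pi$ yields a representation $L$ of $\cs(G,\B)$ with $L=L'\circ q$, whence $\|q(f)\|=\|L(f)\|\le\|f\|$.

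Your approach is more economical --- it avoids the Disintegration Theorem entirely for this lemma --- while the paper's approach has the virtue of exhibiting the lifted strict representation explicitly, which is exactly the construction needed again (in reverse) in the proof of Theorem~\ref{thm-main-ses}. Your closing remark about the pre-representation alternative is also valid, though note that the paper's actual proof of this lemma does not go through pre-representations as in Lemma~\ref{lem-i}; it disintegrates directly.
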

\begin{proof}
  Let $L'$ be a faithful representation of $\cs(G,\BqI)$.  By the
  Disintegration Theorem
  \cite{muhwil:dm08}*{Theorem~4.13}, we can assume that $L'$ is the
  integrated form of a strict representation $(\mu,\go*\HH,\hat\pi')$,
  where $\mu$ is a quasi-invariant measure, $\go*\HH$ is a Borel
  Hilbert bundle and $\hat\pi'$ is a Borel $*$-functor on $\BqI$.
  Thus $\hat\pi'([b])=\bigl(r(b),\bar\pi([b]),s(b)\bigr)$ for a
  bounded operator $\bar\pi([b]):\H(s(b))\to\H(r(b))$ with
  $\|\bar\pi([b])\|\le\|[b]\|$. Then we can define
  $\hat\pi(b)=\bigl(r(b),\pi(b),s(b)\bigr)$, where
  $\pi(b):=\bar\pi([b])$.  Then $(\mu,\go*\HH,\hat\pi)$ is a strict
  representation of $\B$, and its integrated form, $L$, satisfies
  $L=L'\circ q$.  In particular,
  \begin{equation*}
    \|q(f)\|=\|L'(q(f))\|=\|L(f)\|\le \|f\|.
  \end{equation*}
Hence $q$ is norm decreasing on $\gcgb$. 
\end{proof}

\begin{thm}
  \label{thm-main-ses}
  Suppose that $p:\B\to G$ is a Fell bundle over a locally compact
  Hausdorff groupoid $G$.  Let $A=\sa_{0}(\go;\B)$ be the
  \cs-algebra over $\go$ and suppose that $I$ is a $G$-invariant ideal
  in $A$.  Let $\BI$ and $\BqI$ be the Fell bundles described above.
  Then 
  \begin{equation*}
    \xymatrix{0\ar[r]&\cs(G,\BI)\ar[r]^{\iota}&\cs(G,\B)\ar[r]^{q}&\cs
    (G,\BqI)\ar[r]&0}
  \end{equation*}
is a short exact sequence of \cs-algebras.
\end{thm}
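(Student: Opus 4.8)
The plan is to assemble the short exact sequence from the three pieces already in hand: Lemma~\ref{lem-i}, Lemma~\ref{lem-q}, and the observation that $q$ has dense range. By Lemma~\ref{lem-i}, $\iota$ extends to an isometric embedding of $\cs(G,\BI)$ onto the ideal $\Ex(I)\subset\cs(G,\B)$, so exactness at $\cs(G,\BI)$ is immediate. By Lemma~\ref{lem-q}, $q$ extends to a homomorphism of $\cs(G,\B)$ onto $\cs(G,\BqI)$ which is norm-decreasing on $\gcgb$; since $q(\gcgb)=\Gamma$ is dense in $\sa_{c}(G;\BqI)$ in the inductive limit topology (Proposition~\ref{prop-bqi}) and hence dense in $\cs(G,\BqI)$, the range of the extension is a dense ideal, therefore all of $\cs(G,\BqI)$. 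This gives exactness at $\cs(G,\BqI)$. So the only real content is exactness in the middle, i.e.\ $\ker q = \Ex(I)$.

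First I would check the easy inclusion $\Ex(I)\subset\ker q$. For $f\in\gcgbi$ we have $q(f)(x)=[f(x)]$, and $f(x)\in\BI(x)=\bi(x)$ by definition of $\BI$, so $[f(x)]=0$ in $\bqi(x)$; thus $q(\iota(f))=0$ for all $f\in\gcgbi$, and since $\Ex(I)=\overline{\iota(\gcgbi)}$ and $q$ is continuous, $\Ex(I)\subset\ker q$. Consequently $q$ factors through a surjective homomorphism $\tilde q:\cs(G,\B)/\Ex(I)\to\cs(G,\BqI)$, and it remains to show $\tilde q$ is injective, equivalently that $\|q(g)\|\ge\|g+\Ex(I)\|$ for all $g\in\gcgb$ (or even just that $\|q(g)\|\ge\|g\|_{\cs(G,\B)/\Ex(I)}$ on a dense set).

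The main obstacle is exactly this reverse inequality, and I expect it to be handled the same way the isometry was handled in Lemma~\ref{lem-i}: via the Disintegration Theorem. The plan is to take a faithful representation $M$ of $\cs(G,\B)/\Ex(I)$, pull it back to a representation $L=M\circ(\text{quotient})$ of $\cs(G,\B)$ that kills $\Ex(I)$; disintegrate $L$ as the integrated form of a strict representation $(\mu,\go*\HH,\hat\pi)$ of $\B$. Since $L$ vanishes on $\iota(\gcgbi)$, and in particular on sections supported near a unit $u$ taking values in $\bi(u)=I(u)$, one shows the associated $*$-functor $\pi$ must vanish on each fibre $\BI(x)=B(x)\cdot I(s(x))$: indeed $\pi$ restricted to $A(u)=\bqi(u)\oplus\!\!\!\!\!\phantom{.}$ — more precisely, $\pi\restr{A(u)}$ annihilates the ideal $I(u)$, because the integrated form does; then for $b\in\BI(x)$ write $b=b'\cdot a(s(x))$ with $a\in I$ (Lemma~\ref{lem-bsubi} together with Remark~\ref{rem-cohen}), so $\pi(b)=\pi(b')\bar\pi(a(s(x)))=0$. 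Hence $\pi$ descends to a $*$-functor $\bar\pi$ on $\BqI$ with $\hat{\bar\pi}([b])=(r(b),\pi(b),s(b))$, giving a strict representation of $\BqI$ whose integrated form $L'$ satisfies $L'\circ q=L$. Then
\begin{equation*}
  \|M(g+\Ex(I))\|=\|L(g)\|=\|L'(q(g))\|\le\|q(g)\|,
\end{equation*}
and since $M$ is faithful this yields $\|g+\Ex(I)\|\le\|q(g)\|$, completing exactness in the middle. The delicate points to get right are the measurability of the descended functor $\bar\pi$ (it is the fibrewise quotient of a Borel $*$-functor, so Borel) and the verification that $L'$ so obtained is genuinely the integrated form of a strict representation of $\BqI$ in the sense of \cite{muhwil:dm08}, which is essentially bookkeeping given that $\bqi(x)$ carries the quotient Fell-bundle structure established in Proposition~\ref{prop-bqi}.
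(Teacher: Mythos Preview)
Your overall strategy is the same as the paper's: reduce to showing $\ker q\subset\Ex(I)$, take a representation $L$ of $\cs(G,\B)$ that kills $\Ex(I)$, disintegrate it as $(\mu,\go*\HH,\hat\pi)$, argue that $\pi$ annihilates the fibres $\bi(x)$, descend to a strict representation of $\BqI$, and integrate back. The paper frames this as a dichotomy over \emph{all} representations $L$ (either $\ker q\subset\ker L$ or $\Ex(I)\not\subset\ker L$), but when one unpacks it your direct version is what is actually proved.

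There is, however, a genuine gap in your passage from ``$L$ kills $\Ex(I)$'' to ``$\pi\restr{A(u)}$ annihilates $I(u)$.'' Your proposed argument --- look at sections supported near a unit $u$ with values in $I(u)$ --- does not work in general, because $\go$ need not be open in $G$, so such sections need not lie in $\gcgb$ at all. The paper handles this by constructing a nondegenerate homomorphism $\ia:A\to M(\cs(G,\B))$ via $(\ia(a)f)(x)=a(r(x))f(x)$, and then setting $\pi_{L}:=\bar L\circ\ia$. One checks that $\ia(a)f\in\gcgbi$ for $a\in I$, so $L$ killing $\Ex(I)$ forces $\pi_{L}(I)=0$; moreover $\pi_{L}=\int^{\oplus}\pi_{u}\,d\mu(u)$ with $\pi_{u}(a)=\pi(a(u))$, so by separability of $I$ one gets $\pi(I(u))=0$ only for $\mu$-\emph{almost every} $u$.

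This almost-everywhere issue is the second point you skate over. One must excise a $\mu$-null set $N$, use the \ib\ structure of the $B(x)$ to show the conull set $F=\go\setminus N$ can be taken $G$-saturated, define $\bar\pi([b])=\pi(b)$ on $G\restr F$ and $0$ on $G\restr N$, and then invoke quasi-invariance of $\mu$ to see that $G\restr F$ is $\nu$-conull so that the integrated forms agree. Your factorization $b=b'\cdot a(s(x))$ via Cohen is fine for showing $\pi(b)=0$ once $\pi(I(s(x)))=0$ is known, but that input is only available almost everywhere, and the saturation step is not automatic.
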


In view of Lemmas~\ref{lem-i} and \ref{lem-q}, it will suffice to see
that $\ker q =\Ex(I)$.  This will require some work, and we start with
some preliminary comments.

Let $a\in\tA$ and $f\in\gcgb$.  Define $\ia(a)f\in\gcgb$ by
\begin{equation*}
  (\ia(a)f)(x)=a(r(x)) f(x).
\end{equation*}
Now view $\gcgb$ as a dense subspace of the $\cs(G,\B)$ viewed as a
right Hilbert module over itself with respect to the inner product
\begin{equation*}
  \rip *<f,g>:=f^{*}*g\quad\text{for $f,g\in\gcgb$.}
\end{equation*}
Then it is easy to check that
\begin{equation*}
  \rip*<\ia(a)f,g>=\rip*<f,\ia(a^{*})g>.
\end{equation*}
Then if $\|a\|^{2}1_{A}-a^{*}a=c^{*}c$, we have
\begin{equation*}
  \|a\|^{2}\rip*<f,f>-\rip*
  <\ia(a)f,{\ia(a)f}>=\rip*<\ia(c)f,{\ia(c)f}>\ge 0.
\end{equation*}
Therefore $\ia$ is bounded and extends to a homomorphism $\ia:A\to
M(\cs(G,\B))$.

Let $L$ be a representation of $\cs(G,\B)$ on $\H$.  In view of
\cite{muhwil:dm08}*{Theorem~4.13}, we can assume that $L$ is the
integrated form of a strict representation $(\mu,\go*\HH,\hat\pi)$,
where $\mu$ is a quasi-invariant measure, $\H=\go*\HH$ is a Borel
Hilbert bundle and $\hat\pi$ is a Borel $*$-functor with
$\hat\pi(b)=\bigl(r(b),\pi(b),s(b)\bigr)$ for an operator
$\pi(b):B(\H(s(b)))\to B(\H(r(b)))$ with $\|\pi(b)\|\le\|b\|$.  (We
will often write $\pi(b)$ for both the operator and the corresponding
element of $\operatorname{End}(\go*\HH)$.)

Then $L$ defines a representation $\pi_{L}$ of $A$ on $\H$ via
composition with $\ia:A\to M(\cs(G,\B))$.  It is not hard to check
that if $h,k\in L^{2}(\go*\HH,\mu)$, then
\begin{equation*}
  \bip(\pi_{L}(a)h|k) = \int_{G}\bip(\pi(a(u))h(u)|{k(u)}) \,d\mu(u). 
\end{equation*}
In particular,
\begin{equation*}
  \pi_{L}=\int_{G}^{\oplus}\pi_{u}\,d\mu(u),
\end{equation*}
where $\pi_{u}$ is the representation of $A$ given by $a\mapsto
\pi(a(u))$. 
\begin{proof}[Proof of Theorem~\ref{thm-main-ses}]
  Clearly,
  \begin{equation}
    \label{eq:17}
    \Ex(I)\subset \ker q.
  \end{equation}
Therefore it will suffice see that given any representation $L$ of
$\cs(G,\B)$, we have either $\ker q\subset \ker L$, or
$\Ex(I)\not\subset \ker L$.

So let $L$ be a representation of $\cs(G,\B)$.  Let
$(\mu,\go*\HH,\hat\pi)$ and $\pi_{L}$ be as above.  
There are two cases to
consider.  First, $I\subset \ker \pi_{L}$, and second $I \not\subset
\ker\pi_{L}$.

\subsection*{Case \boldmath  $I\subset \ker\pi_L$}
\label{sec:case-isubset-kerpi_l}

Since $A$, and hence $I$, are separable, there is a Borel null set
$N\subset \go$ such that
\begin{equation*}
  I\subset \ker \pi_{u}\quad\text{for all $u\notin N$.}
\end{equation*}
Let $F:=\go\setminus N$.  Then if $u\in  F$, we have
$\pi(I(u))=\set 0$.   

Suppose that $s(x)\in F$.  Since $\bi(x)=B(x)\cdot I(s(x))$ is an
$I(r(x))\sme I(s(x))$-\ib, given any $b\in\bi(x)$, we have $b^{*}b\in
I(s(x))$ and $0=\pi(b^{*}b)=\pi(b)^{*}\pi(b)$.  Therefore $\pi(b)=0$,
and $\pi(bb^{*})=0$.  Since elements of the form $bb^{*}$ space a
dense subspace of $I(r(x))$, we see that $\pi(I(r(x))=\set0$.
Furthermore, since $G$ is $\sigma$-compact, we can shrink $F$ a bit it
necessary, and assume its saturation is Borel (see that last paragraph
of the proof of \cite{muhwil:dm08}*{Lemma~5.20}).  Therefore, we may
as well assume that $F$ itself is saturated.

Since $\mu$ is quasi-invariant, the restriction $G\restr F$ is
$\nu$-conull.\footnote{Recall that $\nu$ is the measure on $G$ given by
  $\int_{\go}\lambda^{u}\,d\mu(u)$.}  Furthermore, $x\in G\restr F$
and $b\in \bi(x)$ implies that $\pi(b)=0$.  Thus if $b ,b'\in B(x)$
are such that $b-b'\in\bi(x)$, then $\pi(b)=\pi(b')$.  Thus if $[b]$
is the class of $b\in B(x)$ in $\bqi(x)$, then we can define
$\bar\pi(x)\in B\bigl(\H(s(x)),\H(r(x))\bigr)$ by
\begin{equation*}
  \bar\pi([b])=\pi(b).
\end{equation*}
Now we define $\hat\pi':\BqI\to \operatorname{End}(\go*\HH)$ by
$\hat\pi'([b]) = \bigl(r(b),\bar\pi([b]),s(b)\bigr)$ where
\begin{equation*}
  \bar\pi([b])=
  \begin{cases}
    \pi(b)&\text{if $p(b)\in G\restr F$, and} \\
0&\text{otherwise.}
  \end{cases}
\end{equation*}
It is immediate that if $f\in\gcgb$, then $x\mapsto \hat\pi'(q(f)(x))$
is Borel.  Since any $\bar f\in\sa_{c}(G;\BqI)$ is the uniform limit of
sections of the form $q(f)$ with $f\in\gcgb$, it follows that
$x\mapsto \hat\pi'(\bar f(x))$ is Borel for all $\bar f\in \gcgbqi$.
Note that since $F$, and hence $N=\go\setminus F$, are saturated, $G$
is the disjoint union of the restrictions $G\restr F$ and $G\restr
N$.  Therefore, $\bar\pi([a][b])=\bar\pi([a])\bar\pi([b])$ if
$([a],[b])\in (\BqI)^{(2)}$.  Similarly, axioms (a)~and (c) of
\cite{muhwil:dm08}*{Definition~4.5} are clearly satisfied and
$\hat\pi'$ is a Borel $*$-functor on $\BqI$.  Furthermore,
$\bigl(\mu,\go*\HH,\hat\pi'\bigr)$ is a strict representation of
$\BqI$, and since $G\restr F$ is $\nu$-conull, it follows from the
Equation~(4.4) in \cite{muhwil:dm08} that
the integrated form $L'$ satisfies $L=L'\circ q$.  In
particular, $\ker q\subset \ker L$ in this case.

\subsection*{Case \boldmath $I \not\subset \ker \pi_L$}
\label{sec:case-boldmath-i}

In this case, there is a $a\in I$ such that $\pi_{L}(a)\not=0$.  Since
$L$ is nondegenerate, there is a $f\in\gcgb$ such that
$\pi_{L}(a)L(f)\not=0$.  But then $L(\ia(a)f)\not=0$.  However,
\begin{equation*}
  (\ia(a)f)(x)=a(r(x))f(x)\in I(r(x))B(x)=B(x)I(s(x))=\bi(x).
\end{equation*}
Therefore $\ia(a)f\in\Ex(I)$, and
\begin{equation*}
  \Ex(I)\not\subset \ker L
\end{equation*}
is this case.  This completes the proof.
\end{proof}

\bibliographystyle{amsxport} 
\bibliography{references-nov01}
\end{document}